\documentclass[12pt, reqno]{article}
\usepackage[utf8]{inputenc}
\usepackage[breaklinks=True]{hyperref}
\usepackage{amsmath, amsfonts, amssymb, amsthm, mathtools, comment, float, todonotes, enumitem, booktabs}
\usepackage[margin=1.5in]{geometry}

\newcommand{\set}[1]{\left\{ #1 \right\}}
\newcommand{\abs}[1]{\left| #1 \right|}
\newcommand{\tup}[1]{\left( #1 \right)}
\newcommand{\ive}[1]{\left[ #1 \right]}

\newcommand{\Fix}{\operatorname{fix}}

\newcommand{\des}{\operatorname{des}}

\newcommand{\asc}{\operatorname{asc}}

\newcommand{\exc}{\operatorname{exc}}
\newcommand{\anxc}{\operatorname{anxc}}
\newcommand{\cyc}{\operatorname{cyc}}
\newcommand{\wexc}{\operatorname{wexc}}
\newcommand{\F}{\mathcal{F}}
\newcommand{\R}{\mathcal{R}}
\newcommand{\C}{\mathcal{C}}
\newcommand{\I}{\mathcal{I}}
\newcommand{\Qsq}{\mathcal{Q}^2}
\newcommand{\D}{\mathcal{D}}
\newcommand{\RtLmax}{\underleftarrow{\max}}
\newcommand{\LtRmax}{\underrightarrow{\max}}
\newcommand{\RtLmin}{\underleftarrow{\min}}

\newcommand{\email}[1]{\href{mailto:#1}{\texttt{#1}}}

\newtheoremstyle{plainsl}% <name>
  {8pt plus 2pt minus 4pt}% <Space above>
  {8pt plus 2pt minus 4pt}% <Space below>
  {\slshape}% <Body font>
  {0pt}% <Indent amount>
  {\bfseries}% <Theorem head font>
  {.}% <Punctuation after theorem head>
  {5pt plus 1pt minus 1pt}% <Space after theorem headi>
  {}% <Theorem head spec (can be left empty, meaning `normal')>

\theoremstyle{plainsl}
  \newtheorem{theorem}{Theorem}[section]
  \newtheorem{proposition}[theorem]{Proposition}
  \newtheorem{lemma}[theorem]{Lemma}
  \newtheorem{corollary}[theorem]{Corollary}
  
\theoremstyle{definition}
  \newtheorem{definition}[theorem]{Definition}
  
\theoremstyle{remark}
  \newtheorem{remark}[theorem]{Remark}

\usepackage{tikz, pgfkeys, subcaption}
\newif\ifpermgridfitlabels
\newif\ifpermgridstaggercols
\newif\ifpermgridstaggerrows
\newsavebox{\permgridlabelbox}
\newlength{\permgridavaillen}

\pgfkeys{
  /permgrid/.is family,
  /permgrid,
  highlight x/.store in=\permgridx,
  highlight y/.store in=\permgridy,
  highlight color/.store in=\permgridcolor,
  highlights/.store in=\permgridhighlights,
  scale/.store in=\permgridscale,
  labelscale/.store in=\permgridlabelscale,
  labelscale min/.store in=\permgridlabelscalemin,
  fit labels/.is if=permgridfitlabels,
  label style/.code={\tikzset{permgrid label style/.style={#1}}},
  column label style/.code={\tikzset{permgrid col style/.style={#1}}},
  row label style/.code={\tikzset{permgrid row style/.style={#1}}},
  stagger columns/.is if=permgridstaggercols,
  stagger rows/.is if=permgridstaggerrows,
  stagger labels/.code={\pgfkeys{/permgrid/stagger columns=#1,
                                 /permgrid/stagger rows=#1}},
  stagger labels/.default=true,
  stagger amount/.store in=\permgridstaggeramount,
}

\newcommand{\permgridquad}[5]{%
  \edef\pgqdir{#3}%
  \def\pgqNE{NE}\def\pgqNW{NW}\def\pgqSE{SE}\def\pgqSW{SW}%
  \def\pgqNEw{NEw}\def\pgqNWw{NWw}\def\pgqSEw{SEw}\def\pgqSWw{SWw}%
  \ifx\pgqdir\pgqNE  \fill[#4] (#1,#2)     rectangle (#5,#5);    \fi
  \ifx\pgqdir\pgqNW  \fill[#4] (0,#2)      rectangle (#1-1,#5);  \fi
  \ifx\pgqdir\pgqSE  \fill[#4] (#1,0)      rectangle (#5,#2-1);  \fi
  \ifx\pgqdir\pgqSW  \fill[#4] (0,0)       rectangle (#1-1,#2-1);\fi
  \ifx\pgqdir\pgqNEw \fill[#4] (#1-1,#2-1) rectangle (#5,#5);    \fi
  \ifx\pgqdir\pgqNWw \fill[#4] (0,#2-1)    rectangle (#1,#5);    \fi
  \ifx\pgqdir\pgqSEw \fill[#4] (#1-1,0)    rectangle (#5,#2);    \fi
  \ifx\pgqdir\pgqSWw \fill[#4] (0,0)       rectangle (#1,#2);    \fi
}

\newcommand{\permgridsetdefaults}{%
  \def\permgridx{}%
  \def\permgridy{}%
  \def\permgridcolor{red!15}%
  \def\permgridscale{0.8}%
  \def\permgridhighlights{}%
  \def\permgridlabelscale{1}%
  \def\permgridlabelscalemin{0.4}%
  \tikzset{permgrid label style/.style={}}%
  \tikzset{permgrid col style/.style={}}%
  \tikzset{permgrid row style/.style={}}%
  \def\permgridstaggeramount{1.5ex}%
  \permgridfitlabelsfalse
  \permgridstaggercolsfalse
  \permgridstaggerrowsfalse
}

\newcommand{\permgrid}[3][]{%
  \begingroup
  \permgridsetdefaults
  \pgfkeys{/permgrid,#1}%
  \begin{tikzpicture}[
    scale=\permgridscale,
    baseline=(current bounding box.center)
  ]
    \ifx\permgridx\empty\else
      \ifx\permgridy\empty\else
        \fill[\permgridcolor] (\permgridx-1,\permgridy-1) rectangle (#2,#2);
      \fi
    \fi
    \ifx\permgridhighlights\empty\else
      \foreach \hx/\hy/\hdir/\hcol in \permgridhighlights {%
        \permgridquad{\hx}{\hy}{\hdir}{\hcol}{#2}%
      }%
    \fi
    \draw[step=1] (0,0) grid (#2,#2);
    \foreach \i in {1,...,#2}
      \node[below] at (\i-0.5,0) {\(\i\)};
    \foreach \j in {1,...,#2}
      \node[left] at (0,\j-0.5) {\(\j\)};
    \foreach \i [count=\j from 1] in {#3}
      \fill (\j-0.5,\i-0.5) circle (2.5pt);
  \end{tikzpicture}%
  \endgroup
}

\newcommand{\permgridcellsize}{%
  \setlength{\permgridavaillen}{\permgridscale cm}%
  \setlength{\permgridavaillen}{0.92\permgridavaillen}%
}

\newcommand{\permgridpoints}[3][]{%
  \begingroup
  \permgridsetdefaults
  \pgfkeys{/permgrid,#1}%
  \global\let\permgridautoscale\permgridlabelscale
  \ifpermgridfitlabels
    \permgridcellsize
    \foreach \px/\py/\pcol/\prow in {#3} {%
      \ifx\pcol\empty\else
        \sbox\permgridlabelbox{\pcol}%
        \pgfmathsetmacro{\permgridtmpscale}{%
          min(\permgridautoscale,
              \the\permgridavaillen / max(\the\wd\permgridlabelbox, 0.1pt))}%
        \global\let\permgridautoscale\permgridtmpscale
      \fi
      \ifx\prow\empty\else
        \sbox\permgridlabelbox{\prow}%
        \pgfmathsetmacro{\permgridtmpscale}{%
          min(\permgridautoscale,
              \the\permgridavaillen
                / max(\the\ht\permgridlabelbox + \the\dp\permgridlabelbox, 0.1pt))}%
        \global\let\permgridautoscale\permgridtmpscale
      \fi
    }%
    \pgfmathsetmacro{\permgridtmpscale}{max(\permgridautoscale,\permgridlabelscalemin)}%
    \global\let\permgridautoscale\permgridtmpscale
  \fi
  \begin{tikzpicture}[
    scale=\permgridscale,
    baseline=(current bounding box.center),
    permgrid label/.style={scale=\permgridautoscale, permgrid label style},
  ]
    \ifx\permgridx\empty\else
      \ifx\permgridy\empty\else
        \fill[\permgridcolor] (\permgridx-1,\permgridy-1) rectangle (#2,#2);
      \fi
    \fi
    \ifx\permgridhighlights\empty\else
      \foreach \hx/\hy/\hdir/\hcol in \permgridhighlights {%
        \permgridquad{\hx}{\hy}{\hdir}{\hcol}{#2}%
      }%
    \fi
    \draw[step=1] (0,0) grid (#2,#2);
    \foreach \px/\py/\pcol/\prow in {#3} {%
      \fill (\px-0.5,\py-0.5) circle (2.5pt);
    }
    \foreach \px/\py/\pcol/\prow [count=\pidx] in {#3} {%
      \def\permgridcolshift{0}\def\permgridrowshift{0}%
      \ifodd\pidx\else
        \ifpermgridstaggercols \def\permgridcolshift{1}\fi
        \ifpermgridstaggerrows \def\permgridrowshift{1}\fi
      \fi
      \ifx\pcol\empty\else
        \node[permgrid label, anchor=north,
              yshift=-\permgridcolshift*\permgridstaggeramount,
              permgrid col style]
          at (\px-0.5,0) {\pcol};
      \fi
      \ifx\prow\empty\else
        \node[permgrid label, anchor=east,
              xshift=-\permgridrowshift*\permgridstaggeramount,
              permgrid row style]
          at (0,\py-0.5) {\prow};
      \fi
    }
  \end{tikzpicture}%
  \endgroup
}

\usepackage{xargs}

\title{Foatic Homomesy}
\author{Peter Mao\footnote{Department of Mathematics, Drexel University,
\email{pm929@drexel.edu}, \url{https://maopeter.github.io/}}}
\date{\today}
\begin{document}

\maketitle

\begin{abstract}
Using a unified method of identifying a telescoping function for each statistic-action pair, we prove conjectures of La Croix and Roby, and of Sheridan-Rossi that various permutation statistics, such as the number of fixed points, are homomesic under Foatic actions: compositions of the Foata bijection and dihedral symmetries of one-line notation. Underlying these results is a family of bijective identities relating permutation statistics across the Foata bijection; one of them --- an unexpected invariance of the number of right-to-left minima under reverse-complement, proved by an explicit bijection --- may be of independent interest. The telescoping functions also turn out to be shared among some Foatic actions, implying that mixing certain actions still yields a constant average on closed walks.
\medskip

\textbf{Keywords.} Homomesy, dynamical algebraic combinatorics, Foata bijection, R\'enyi--Foata transform, Foatic actions, permutation statistics, telescoping function, symmetric group.
\medskip

\textbf{Mathematics Subject Classification 2020:} 05E18 (primary); 05A05, 05A19 (secondary)
\end{abstract}

\section{Introduction}

The Foata bijection, also called the R\'enyi--Foata transform \cite{Renyi1962}, relates cycle notation and one-line notation for permutations, and provides a bijective explanation for the equidistribution of the permutation statistics $\operatorname{exc}$ and $\operatorname{des}$ \cite{FoataSchutzenberger1970}.
Being a bijection from a symmetric group $S_n$ to itself, the Foata bijection can be studied as a discrete dynamical system.
From this viewpoint (called ``dynamical algebraic combinatorics'' in \cite{Roby2016}), La Croix and Roby \cite{LaCroixRoby2020FoaticAO} and then Sheridan-Rossi \cite{Sheridan-Rossi2020} analyzed the algebraic properties of its orbits,
discovering certain homomesies for permutation bijections constructed by composing the Foata bijection with simple involutive bijections.
(In the words of Propp and Roby \cite{Propp_Roby_2015}, a \emph{homomesy} occurs when a statistic has a constant average on every orbit of a discrete dynamical system.)
On the computational and experimental side, first Elder et al. \cite{ElderEtAl2024} and then Dowling and Lafreni\`ere \cite{Dowling_2025} have used the FindStat database to discover permutation statistic homomesies. Of particular note to our results is that Elder et al. found no statistic to be homomesic under the Foata bijection itself.

In Section \ref{mainthms}, we state the homomesy conjectures that were left open in \cite{LaCroixRoby2020FoaticAO} and \cite{Sheridan-Rossi2020}, and then we prove them in Section \ref{mainpfs}.
Along the way, we also strengthen the characterization of these homomesy results by showing that some homomesies are compatible with each other.
That is, in some cases, we show that averages equal.
Not just over all orbits of a given bijection, but also over closed walks formed by any mix of several bijections (freely deciding on which bijection to take at each given step).
This follows from showing that some homomesies share telescoping functions which govern their averaging behavior.
Our proofs proceed through bijective identities for permutation statistics under the Foata bijection, one of which may be of independent interest: the number of right-to-left minima of $\F^{-1}(\theta)$ is unchanged compared with $\F^{-1}(\Qsq(\theta))$ where $\Qsq$ is reverse-complementation (Proposition \ref{prop.RtL_foata_rotation}), which we establish with an explicit bijection.

\subsection*{On the use of AI}

Both the research and writing portions of the production of this paper were done in collaboration with various AI models. Writing assistance came in the form of drafting standard definitions, formatting tables, editing, and proofreading. All of what is written has been proofread and attested to by the author.

On the mathematical side, AI was mainly useful on two fronts. First was the identification of certain telescoping functions (see $f$ in Lemma \ref{lem.telescope_strategy}) from example data. As seen in tables \ref{tab.proved_telescopes} and \ref{tab.conjectured_telescopes}, not all of the functions turned out to be straightforward linear combinations of well-known permutation statistics (see the conjectured $f$ for $\R-\Qsq$). Therefore it was highly helpful to allow AI to work out these functions autonomously. The second front where AI was useful was in writing code to check the claims of lemmas and even individual proof steps very quickly. This too was accomplished mostly autonomously by providing the draft .tex file and requesting code to verify or locate mistakes in the proofs given.

\subsection*{Acknowledgements}

We thank Darij Grinberg for suggesting the approach to proving homomesy results and help refining some of the proofs into much more digestible forms.

\section{Definitions}

This section defines the main topics of study, homomesy (Definition \ref{def.homomesy}) and Foatic maps (Definition \ref{def.foaticmap}), as well as any conventions and notations that we will be using.

Originally coined by James Propp and Tom Roby \cite{Propp_Roby_2015}, homomesy refines taking statistic averages from globally over a set of objects $X$ to $\tau$-orbits of $X$ for some invertible map $\tau$.
%This motivates also taking other probability inspired measures (variance, E[X^n])?

\begin{definition}
    \label{def.homomesy}
    Let $X$ be a set.
    
    \begin{enumerate}

    \item A \emph{statistic} on $X$ simply means a map from $X$ to a further set. In our cases, statistics will be numerical, i.e., their values will be numbers.

    \item
    An \emph{action} on $X$ shall mean an invertible map $\tau : X \to X$ whose orbits on $X$ are all finite.
    
    \item We say a statistic $s : X \to \mathbb{C}$ is \emph{homomesic} under a given action $\tau$ on $X$ if there exists some constant $c \in \mathbb{C}$ such that for every $\tau$-orbit $\mathcal{O}$ in $X$, we have
    \begin{align*}
        \frac{1}{|\mathcal{O}|} \sum_{x \in \mathcal{O}} s(x) = c.
    \end{align*}
    In other words, the average value of $s$ over any $\tau$-orbit is always the same constant $c$.
    In this situation, we say that the statistic $s$ under the action of $\tau$ on $X$ is \emph{$c$-mesic}.

    \end{enumerate}
\end{definition}

Of course, if $X$ is finite, then an action on $X$ is the same as a permutation of $X$.
However, most of the time, our set $X$ will itself consist of permutations, so the word ``action'' for a permutation \textbf{of} $X$ will help us distinguish such from permutations \textbf{in} $X$.

We are interested in homomesy of permutation statistics, so we both define some basics which will be familiar and some conventions which we will be using going forward.

\begin{definition}
    Let $n \geq 0$ be an integer. Define the set $[n] := \{1, 2, \dots, n\}$.

    A \emph{permutation} of $[n]$ is a bijection $\sigma : [n] \to [n]$.
    The group formed by all those permutations from $[n]$ to $[n]$ is written as $S_n$.
    The multiplication on $S_n$ is composition of permutations.
\end{definition}

\begin{definition}
    A permutation $\sigma \in S_n$ can be encoded as the word $\sigma_1 \sigma_2 \cdots \sigma_n$, where $\sigma_i = \sigma(i)$ for each $i \in [n]$.
    This is called the \emph{one-line notation} (or, short, \emph{OLN}) of $\sigma$.
    Note that this word $\sigma_1 \sigma_2 \dots \sigma_n$ consists of the numbers from $1$ up to $n$ without duplicates.
\end{definition}

\begin{definition}
    A permutation $\sigma \in S_n$ can be encoded as a list of all its disjoint cycles, including the $1$-cycles (i.e., fixed points). 
    These cycles can be ordered arbitrarily and are only unique up to cyclic rotation.
    If we additionally require that each cycle starts with its largest element, and that the cycles are ordered by their largest elements (i.e., their largest elements increase as we go to right), then this list becomes genuinely unique, and is called the \emph{canonical cycle decomposition} (or, short, \emph{CCD}) of $\sigma$.
    
    %is said to be written as its \emph{canonical cycle decomposition} or \emph{CCD} when written as the product of disjoint cycles where each cycle starts (the leftmost element) with its largest element and the cycles are ordered by their largest elements from smallest to largest going left to right. Those cycles can be read as a map by interpreting two subsequent elements both in the same set of parentheses $i$ and $j$ from left to right as $\sigma(i) = j$. When an element $l$ is the last in a set of parentheses and $m$ is the first element in that set of parentheses, $\sigma(l) = m$.

    For example, if $\sigma = 417682953 \in S_9$ (shown here in one-line notation), then a list of all cycles of $\sigma$ is
    $$(1, 4, 6, 2)(3, 7, 9)(5, 8);$$
    but this is not the CCD, since the largest element in each cycle does not come first.
    Putting the largest elements of each cycle first, the list becomes
    $$(6,2,1,4)(9,3,7)(8,5).$$
    This is still not the CCD, since the cycles are out of order. In increasing order, the largest elements are $6, 8, 9$, so the CCD of $\sigma$ is given by
    $$\sigma = (6,2,1,4)(8,5)(9,3,7).$$
    We shall always delimit the cycles in a CCD by parentheses.
    
    Note that if we interpret each cycle in the CCD of $\sigma$ as a cyclic permutation, then the product of these cyclic permutations is $\sigma$.
\end{definition}

In this work, we shall study the orbits of permutations under certain actions formed by composing (``intertwining'') the Foata bijection with certain simple involutions.
These compositions are the so-called \emph{Foatic maps}, originally introduced by Michael La Croix and Tom Roby \cite{LaCroixRoby2020FoaticAO} and then studied further by Elizabeth Sheridan-Rossi \cite{Sheridan-Rossi2020}.
We shall soon recall their definitions.
First, however, we will define some basic features of permutations.

We begin with the permutation statistics.
Throughout, $\sigma \in S_n$ is written in OLN, and we illustrate each definition with the running example $\pi = 14253 \in S_5$.

\begin{definition}
    Let $\sigma \in S_n$ and $i \in [n]$.
    We call the position $i$ a \emph{fixed point} of $\sigma$ if $\sigma(i) = i$, an \emph{excedance} of $\sigma$ if $\sigma(i) > i$, an \emph{antiexcedance} of $\sigma$ if $\sigma(i) < i$, and a \emph{weak excedance} if $\sigma(i) \geq i$ (equivalently, $i$ is an excedance or a fixed point).
    For $i \in [n-1]$, we call the position $i$ a \emph{descent} of $\sigma$ if $\sigma(i) > \sigma(i+1)$, and an \emph{ascent} of $\sigma$ if $\sigma(i) < \sigma(i+1)$.

    For the running example $\pi = 14253$, position $1$ is the unique fixed point; positions $2$ and $4$ are excedances; positions $3$ and $5$ are antiexcedances; the weak excedances are positions $1$, $2$, and $4$; the descents are positions $2$ and $4$; and the ascents are positions $1$ and $3$.
\end{definition}

\begin{definition}
    Let $\sigma \in S_n$ and $i \in [n]$.
    The entry $\sigma(i)$ is
    \begin{enumerate}
        \item a \emph{left-to-right maximum} of $\sigma$ if $\sigma(i) > \sigma(j)$ for every $j < i$;
        \item a \emph{right-to-left maximum} of $\sigma$ if $\sigma(i) > \sigma(j)$ for every $j > i$; and
        \item a \emph{right-to-left minimum} of $\sigma$ if $\sigma(i) < \sigma(j)$ for every $j > i$.
    \end{enumerate}

    For $\pi = 14253$, the left-to-right maxima are $1$, $4$, and $5$; the right-to-left maxima are $5$ and $3$; and the right-to-left minima are $1$, $2$, and $3$.
\end{definition}

\begin{definition}
    The statistics $\Fix$, $\exc$, $\anxc$, $\wexc$, $\des$, and $\asc$ are defined on $S_n$ and send each permutation $\sigma \in S_n$ to, respectively, the number of fixed points, excedances, antiexcedances, weak excedances, descents, and ascents of $\sigma$.
    Likewise $\LtRmax$, $\RtLmax$, and $\RtLmin$ return the number of left-to-right maxima, right-to-left maxima, and right-to-left minima, and $\cyc$ returns the number of cycles in the CCD of $\sigma$ (counting fixed points).

    For $\pi = 14253$, we have $\Fix(\pi) = 1$, $\exc(\pi) = 2$, $\anxc(\pi) = 2$, $\wexc(\pi) = 3$, $\des(\pi) = 2$, $\asc(\pi) = 2$, $\LtRmax(\pi) = 3$, $\RtLmax(\pi) = 2$, $\RtLmin(\pi) = 3$, and $\cyc(\pi) = 2$.
\end{definition}

\begin{definition}
    For a statement $P$, the \emph{truth value} (aka \emph{Iverson bracket}) $[P]$ equals $1$ if $P$ is true and $0$ otherwise.
    We use it to define ``local'' permutation statistics with values in $\set{0,1}$:
    For $\sigma \in S_n$ and $k \in [n-1]$, the \emph{descent indicator} at position $k$ is
    $$D_k(\sigma) := [\sigma(k) > \sigma(k+1)].$$
    For $\sigma \in S_n$ and $k \in [n]$, the \emph{fixed-point indicator} at position $k$ is
    $$\Fix_k(\sigma) := [\sigma(k) = k].$$
    Our results use only the indicators at the two ends: $D_1$ and $D_{n-1}$, and $\Fix_1$ and $\Fix_n$.

    For $\pi = 14253$, we have $D_1(\pi) = 0$ and $D_{n-1}(\pi) = D_4(\pi) = 1$, while $\Fix_1(\pi) = 1$ and $\Fix_n(\pi) = \Fix_5(\pi) = 0$.
\end{definition}

Next we introduce the most crucial ingredient of our bijections:

\begin{definition}
    The \emph{Foata bijection} is the map $\F: S_n \to S_n$ that takes a permutation in CCD, drops the parentheses, and interprets the resulting sequence as an OLN.
    
    For example, $\F((6,2,1,4)(8,5)(9,3,7)) = 621485937$.\newline

    The inverse $\F^{-1}: S_n \to S_n$ takes a permutation in OLN and inserts parentheses in appropriate locations to recover a permutation in CCD.
    Specifically, the parentheses are inserted to the left of each left-to-right maximum in the OLN.
    
    For example, $\F^{-1}(621485937) = (6,2,1,4)(8,5)(9,3,7)$ since the left-to-right maxima are $6$, $8$, and $9$.
    
    We refer to \cite[Lemma 3.39]{Bona-CP3} for more about $\F$ and a proof of the above construction of $\F^{-1}$.
    See also \cite[Proposition 1.3.1]{EC1} where $\F$ is called the \emph{fundamental bijection}.\newline
\end{definition}

The Foata bijection is then intertwined with the following five \emph{dihedral involutions} on $S_n$.

\begin{definition}

    The \emph{complement} map $\C: S_n \to S_n$ takes a permutation in OLN and replaces each element $i$ with $n+1-i$.
    
    For example, $\C(621485937) = 489625173$ since $n=9$ and we therefore replace each element $i$ with $10 - i$.\newline

    The \emph{reversal} map $\R: S_n \to S_n$ takes a permutation in OLN and reverses the order of the elements.
    
    For example, $\R(621485937) = 739584126$.\newline

    The \emph{inverse} map $\I: S_n \to S_n$ takes a permutation and produces its inverse.\newline
    
    The \emph{rotation} map $\Qsq: S_n \to S_n$
    (also known as \emph{reverse complementation}) % in genetics but also e.g. in Sagan's book
    % (also known as the \emph{Sch\"utzenberger involution})?
    is the composition $\Qsq = \C \circ \R = \R \circ \C$. This means we can easily calculate the rotation map by reading the complement of the OLN backwards.\newline

    The \emph{rotated inverse} map $\D: S_n \to S_n$ is the composition $\D = \Qsq \circ \I = \I \circ \Qsq$.
    
    The five maps we just defined are called the \emph{dihedral involutions}.
\end{definition}

\begin{definition}
    \label{def.foaticmap}
    The \emph{Foatic maps} (as defined in \cite{LaCroixRoby2020FoaticAO, Sheridan-Rossi2020}) are the compositions of the form
    $$S_n \xrightarrow{\F} S_n \xrightarrow{\mathcal{A}} S_n \xrightarrow{\F^{-1}} S_n \xrightarrow{\mathcal{B}} S_n$$
    where $\mathcal{A}$ and $\mathcal{B}$ are dihedral involutions.

    The Foatic maps will be named after the dihedral involutions $\mathcal{A}$ and $\mathcal{B}$ involved: The above composition will be referred to as ``$\mathcal{A}$-$\mathcal{B}$''.
    For example, $\D \circ \F^{-1} \circ \C \circ \F$ is called the complement-rotated inverse map.
\end{definition}

\section{Main theorems: Foatic Homomesies}
\label{mainthms}

Our main results prove the conjectures made by La Croix and Roby in \cite{LaCroixRoby2020FoaticAO} and by Sheridan-Rossi in \cite{Sheridan-Rossi2020}.
Theorem \ref{thm.mainFix} unifies the result that $\Fix$ is $1$-mesic under reversal-inversion in \cite[Theorem 14]{LaCroixRoby2020FoaticAO} with a proof that the actions of complement-inversion, complement-rotation, and reversal-rotation also have $\Fix$ as $1$-mesic as in \cite[Conjectures 16, 17.1, 18]{LaCroixRoby2020FoaticAO}.
Then Theorem \ref{thm.mainRtL}, Theorem \ref{thm.mainD}, and Theorem \ref{thm.mainFixEnds} prove \cite[Conjecture 2.5.4, Conjecture 2.5.5, and Conjecture 2.5.6]{Sheridan-Rossi2020} respectively.

\begin{theorem}
    \label{thm.mainFix}
    For $n \geq 1$, the statistic $\Fix$ is $1$-mesic under the actions of
    \begin{enumerate}
        \item complement-inversion ($\I \circ \F^{-1} \circ \C \circ \F$),
        \item complement-rotation ($\Qsq \circ \F^{-1} \circ \C \circ \F$),
        \item reversal-rotation ($\Qsq \circ \F^{-1} \circ \R \circ \F$), and
        \item reversal-inversion ($\I \circ \F^{-1} \circ \R \circ \F$).
    \end{enumerate}

    \hyperlink{pf.fix_full}{\emph{Jump to proof.}}
\end{theorem}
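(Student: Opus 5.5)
The plan is to use a telescoping argument. Write $\tau = \mathcal{B} \circ \F^{-1} \circ \mathcal{A} \circ \F$ for each of the four actions, with $\mathcal{A} \in \set{\C, \R}$ and $\mathcal{B} \in \set{\I, \Qsq}$. The goal is a function $f : S_n \to \mathbb{Z}$ satisfying
$$\Fix(\sigma) - 1 = f(\tau(\sigma)) - f(\sigma) \qquad \text{for all } \sigma \in S_n.$$
Summing this over a $\tau$-orbit $\mathcal{O}$ makes the right-hand side telescope to $0$. Hence $\sum_{\sigma \in \mathcal{O}} \Fix(\sigma) = |\mathcal{O}|$, which is exactly $1$-mesy. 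If one $f$ works for several of the four actions, the same computation also gives constant averages on mixed closed walks.

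The first reduction removes $\mathcal{B}$. Both $\I$ and $\Qsq$ preserve the number of fixed points, since $\Qsq$ is conjugation by $i \mapsto n+1-i$. Therefore $\Fix(\tau(\sigma)) = \Fix(\F^{-1}(\mathcal{A}\theta))$ with $\theta = \F(\sigma)$. Everything can then be expressed through $\theta$ in one-line notation, using the description of $\F^{-1}$ by left-to-right maxima. A fixed point of $\F^{-1}(\theta)$ is an entry $\theta_i$ that is a left-to-right maximum and is either last ($i=n$) or followed by a larger entry. Applying $\C$ turns this into a condition involving left-to-right minima and descents of $\theta$. Applying $\R$ turns it into a condition involving right-to-left maxima and ascents.

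So the key intermediate identity I will aim for has the form
$$\Fix(\F^{-1}(\theta)) + \Fix(\F^{-1}(\mathcal{A}\theta)) = 2 + g(\theta) - g'(\mathcal{A}\theta),$$
with $g, g'$ built from the end indicators $D_1$, $D_{n-1}$, $\Fix_1$, $\Fix_n$ and records such as $\RtLmin$. I expect $g$ to be a boundary correction, because most positions should contribute in a complementary way to the two sides. Concretely, I will pair positions via a bijection between "left-to-right maximum followed by an ascent" in $\theta$ and the analogous pattern in $\mathcal{A}\theta$, and track what happens at the first and last positions. Combining this identity with $\Fix(\tau\sigma) = \Fix(\F^{-1}\mathcal{A}\theta)$ then produces $f$ as a combination of $\Fix$ and these end statistics, evaluated on $\sigma$ and on $\F(\sigma)$.

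The last step is to check that these end statistics transform correctly under $\mathcal{B} \circ \F^{-1}$. For example, the first entry of $\F(\sigma)$ is the largest element of the first cycle of the CCD. Likewise, $\Fix_1$ and $\Fix_n$ are swapped by $\Qsq$ but preserved by $\I$. This is where the four cases genuinely differ, so I will treat $\mathcal{B} = \I$ and $\mathcal{B} = \Qsq$ separately.

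The main obstacle is identifying the correct telescoping function $f$, especially for the $\R$ cases. There the reversed word's left-to-right maxima are the original right-to-left maxima, which interact poorly with the CCD structure. I would first guess $f$ from small-$n$ orbit data as a linear combination of $\Fix_1$, $\Fix_n$, $D_1$, $D_{n-1}$, $\RtLmin$ and $\cyc$, and then prove the identity by the local pairing described above. If no such linear combination exists, I will instead look for a bijective identity relating $\RtLmin$ across $\F^{-1}$ and a dihedral involution, in the spirit of the invariance of $\RtLmin \circ \F^{-1}$ under $\Qsq$ advertised in the introduction.
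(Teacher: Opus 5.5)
\textbf{There is a genuine gap: the proposal never produces the telescoping function, and the heuristic meant to lead to it is false.}

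The theorem amounts to the identity $\Fix(\sigma)-1=f(\tau(\sigma))-f(\sigma)$ for one explicit $f$. Your key intermediate identity leaves $g,g'$ unspecified and falls back on guessing $f$ from data. Its guiding heuristic also fails: $\Fix(\F^{-1}(\theta))+\Fix(\F^{-1}(\mathcal{A}\theta))$ is not $2$ plus a boundary term, and positions do not pair off in a complementary way.
\begin{itemize}
\item For $\mathcal{A}=\C$ and $\theta=12\cdots n$ the sum is $n$: you compare the identity with an $n$-cycle.
\item For $\theta=2143$ the sum is $1$: you compare $(2,1)(4,3)$ with $(3)(4,1,2)$.
\end{itemize}
In fact $\Fix(\F^{-1}(\theta))+\Fix(\F^{-1}(\mathcal{A}\theta))=n+1-\exc(\F^{-1}(\theta))-\exc(\F^{-1}(\mathcal{A}\theta))$, so the correction is a global statistic.

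Your first reduction also discards exactly the information that is needed. You keep only the fact that $\mathcal{B}\in\{\I,\Qsq\}$ preserves $\Fix$. But $f$ must be evaluated at $\tau(\sigma)$ itself, and what matters about $\mathcal{B}$ is that it exchanges excedances and antiexcedances.

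\textbf{The missing idea.} In your sign convention, $f=\exc$ works uniformly for all four actions, via a chain of counting identities:
\begin{itemize}
\item $\exc(\mathcal{B}(x))=\anxc(x)$ for $\mathcal{B}\in\{\I,\Qsq\}$.
\item $\anxc(x)=\des(\F(x))$. Every cycle in the CCD starts with its maximum, so antiexcedances are exactly the descents inside cycles. No descent occurs across a cycle boundary, because each new cycle starts with a left-to-right maximum.
\item $\des(\mathcal{A}(y))=\asc(y)$ for $\mathcal{A}\in\{\C,\R\}$.
\item $\asc(\F(\pi))=n-1-\anxc(\pi)=\Fix(\pi)+\exc(\pi)-1$.
\end{itemize}
Composing these gives $\exc(\tau(\pi))=\exc(\pi)+\Fix(\pi)-1$.

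Working with these aggregate counts, rather than the positional pattern ``left-to-right maximum followed by an ascent,'' makes the reversal cases exactly as easy as the complement cases. So the obstacle you anticipated for $\R$ never arises.

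Finally, by Remark~\ref{rmk.telescope_fn}, every valid $f$ differs from $\exc$ only by a constant on each orbit. A search restricted to end indicators, $\RtLmin$, $\cyc$ and $\Fix$ would therefore have to reproduce $\exc$ orbitwise, and your sketch gives no reason to expect such a combination exists.
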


\begin{theorem}
    \label{thm.mainRtL}
    For $n \geq 1$, the statistic $\RtLmax - \RtLmin$ is $0$-mesic under the action of rotation-complement ($\C \circ \F^{-1} \circ \Qsq \circ \F$).

    \medskip
    \hyperlink{pf.rtl}{\emph{Jump to proof.}}
\end{theorem}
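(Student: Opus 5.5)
The plan is to show directly that $s := \RtLmax - \RtLmin$ is a telescoping difference along the action $\tau := \C \circ \F^{-1} \circ \Qsq \circ \F$. Concretely, I aim for
$$s(\tau\sigma) = f(\sigma) - f(\tau\sigma) \qquad \text{for all } \sigma \in S_n,$$
with $f = \RtLmin$. Summing this identity over a $\tau$-orbit $\mathcal{O}$, the right-hand side telescopes to $0$. The left-hand side is $\sum_{x\in\mathcal{O}} s(x)$, since $\tau$ permutes $\mathcal{O}$. This gives $0$-mesy, which is exactly the pattern of the telescoping criterion (Lemma \ref{lem.telescope_strategy}).

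To get the identity, fix $\sigma$ and set $\theta = \F(\sigma)$ and $\rho = \F^{-1}(\Qsq(\theta))$, so that $\tau\sigma = \C(\rho)$. Complementation reverses the order of values while keeping positions. Hence an entry is a right-to-left maximum of $\C(\rho)$ exactly when the corresponding entry is a right-to-left minimum of $\rho$, and vice versa. This gives $\RtLmax(\C\rho) = \RtLmin(\rho)$ and $\RtLmin(\C\rho) = \RtLmax(\rho)$, and therefore
$$s(\tau\sigma) = \RtLmin(\rho) - \RtLmax(\rho).$$

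The two terms are then handled separately. For the first, I invoke Proposition \ref{prop.RtL_foata_rotation}: $\RtLmin(\F^{-1}(\theta)) = \RtLmin(\F^{-1}(\Qsq\theta))$. This says $\RtLmin(\rho) = \RtLmin(\F^{-1}(\F\sigma)) = \RtLmin(\sigma)$. For the second, the complement identity above gives $\RtLmax(\rho) = \RtLmin(\C\rho) = \RtLmin(\tau\sigma)$. Combining,
$$s(\tau\sigma) = \RtLmin(\sigma) - \RtLmin(\tau\sigma),$$
which is the desired telescoping identity with $f = \RtLmin$.

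All of the difficulty sits in Proposition \ref{prop.RtL_foata_rotation}, the nontrivial invariance of $\RtLmin \circ \F^{-1}$ under reverse-complement. The rest is the elementary observation about complements plus telescoping. If I had to prove that proposition myself, I would first read off $\RtLmin(\F^{-1}(\theta))$ directly from $\theta$ using the cycle structure given by the left-to-right maxima of $\theta$. I would then build an explicit bijection, or a weight-preserving matching, between the right-to-left minima before and after applying $\Qsq$. I would check it by machine on small $n$ before trying to write a conceptual proof.
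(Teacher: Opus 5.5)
Your proposal is correct and is essentially the paper's proof. Both arguments reduce to the identity $\RtLmax(\tau(\sigma)) = \RtLmin(\sigma)$, obtained from the complement lemma together with Proposition~\ref{prop.RtL_foata_rotation}. The only difference is cosmetic: you telescope in the shifted form $s(\tau\sigma) = \RtLmin(\sigma) - \RtLmin(\tau\sigma)$, whereas the paper uses the standard form of Lemma~\ref{lem.telescope_strategy} with $f = \RtLmax$, which by that same identity equals $\RtLmin \circ \tau^{-1}$.
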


\begin{theorem}
    \label{thm.mainD}
    For $n \geq 2$, the statistic $D_1 - D_{n-1}$ is $0$-mesic under the action of rotation-reversal ($\R \circ \F^{-1} \circ \Qsq \circ \F$).

    \medskip
    \hyperlink{pf.des_ends}{\emph{Jump to proof.}}
\end{theorem}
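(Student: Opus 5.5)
The plan is to use the telescoping strategy of Lemma \ref{lem.telescope_strategy}. Write $\tau = \R \circ \F^{-1} \circ \Qsq \circ \F$. It suffices to find a statistic $f$ on $S_n$ with
$$D_1(\sigma) - D_{n-1}(\sigma) = f(\tau\sigma) - f(\sigma) \qquad\text{for all } \sigma \in S_n.$$
Summing over an orbit then gives $0$. I would allow the right-hand side to be shifted by one step of $\tau$ if that turns out to be the natural form; this is equally good, since the sum over a closed orbit is still $0$.

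\emph{Setting up notation.} Fix $\sigma$ and let $w = \F(\sigma)$ be its CCD word. Let $u = \Qsq(w)$, so $u_i = n+1-w_{n+1-i}$. Let $\rho = \F^{-1}(u)$, so that $\tau\sigma = \R(\rho)$. Since $\R$ reverses positions, $(\tau\sigma)(n-1)=\rho(2)$ and $(\tau\sigma)(n)=\rho(1)$. Hence
$$D_{n-1}(\tau\sigma) = [\rho(1)<\rho(2)] \quad\text{and}\quad D_1(\tau\sigma)=[\rho(n)<\rho(n-1)].$$
Both quantities are therefore local data of $\rho$ at the values $1,2$ and $n-1,n$, that is, the successors of these letters in the block structure of $u$. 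The successor of $x$ in $u$ is the next letter in its cycle, or the block head if $x$ ends its cycle; the blocks are cut before the left-to-right maxima of $u$. Likewise, $D_1(\sigma)$ and $D_{n-1}(\sigma)$ are read off from the successors of $1,2$ and $n-1,n$ in the cycles of $w$.

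\emph{Translating through $\Qsq$.} Under $u_i = n+1-w_{n+1-i}$, the letters $1,2$ of $u$ correspond to the letters $n,n-1$ of $w$, at mirrored positions. Left-to-right maxima of $u$ correspond to right-to-left minima of $w$. So I would express $[\rho(1)<\rho(2)]$ purely in terms of $w$: the positions of $n$ and $n-1$ in $w$, their right neighbours, and whether those neighbours are right-to-left minima of $w$. Note that $n$ is the head of the last block of $w$. I would then compare this expression with $D_1(\sigma)$, which is $[\sigma(1)>\sigma(2)]$ read from the cycles of $1$ and $2$ in $w$, and with $D_{n-1}(\sigma)$.

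\emph{Finding $f$.} The naive candidate $f=D_{n-1}$, which would need $D_1(\sigma)=D_{n-1}(\tau\sigma)$, already fails for $n=2$: $\sigma=12$ gives $\rho=12$. So $f$ must contain correction terms. I would guess them among end-local indicators such as $D_{n-1}$, $\Fix_1$, $\Fix_n$, $[\sigma(n)=1]$ and $[\sigma(1)=n]$, and first test candidates by computer for $n\le 7$. Once a candidate is fixed, the proof is a finite case analysis. The cases are: whether $n-1$ is a fixed point or shares the last cycle with $n$; whether $1$ or $2$ is a block head or a fixed point; and where $1,2$ sit relative to $n-1,n$ in $w$. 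In each case the identity should reduce to a one-line check. The \emph{main obstacle} is identifying the correct $f$ and keeping this case analysis manageable. The delicate cases are those where $\Qsq$ makes a letter a new left-to-right maximum of $u$, so that a block of $u$ begins there and the successor of $1$ or $2$ wraps around to a block head.
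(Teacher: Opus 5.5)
\textbf{There is a genuine gap.} You set up the telescoping framework, but you never produce the telescoping function or the identity it must satisfy, and that is the whole content of the theorem.

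The family you propose to search is also too small. Already for $n=4$, no linear combination of $D_1$, $D_{n-1}$, $\Fix_1$, $\Fix_n$, $[\sigma(n)=1]$, $[\sigma(1)=n]$ satisfies $D_1(\sigma)-D_{n-1}(\sigma)=f(\tau\sigma)-f(\sigma)$. The five permutations $2314$, $2134$, $3412$, $1243$, $1432$ already impose inconsistent linear conditions on the coefficients.

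The missing ingredient is the value-local statistic $D_1(\I(\pi))=[\pi^{-1}(1)>\pi^{-1}(2)]$, which records whether the letter $2$ precedes the letter $1$. The paper takes $f(\pi)=D_1(\pi)-D_1(\I(\pi))$ in the convention $s(\pi)=f(\pi)-f(\tau(\pi))$, so $-f$ in yours. By Remark~\ref{rmk.telescope_fn}, any valid choice agrees with this one up to an additive constant on each orbit.

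This is also where your translation step points the wrong way. Write $U=\F^{-1}\circ\Qsq\circ\F$. Because $\Qsq$ reverses words, the letter immediately \emph{after} $n$ (resp.\ $n-1$) in $\F(\theta)$ becomes a letter immediately \emph{before} $1$ (resp.\ $2$) in $\Qsq(\F(\theta))$. When $n$ (resp.\ $n-1$) is not the last entry of its cycle, that letter is $\theta(n)$ (resp.\ $\theta(n-1)$), and it becomes $n+1-\theta(n)$ (resp.\ $n+1-\theta(n-1)$). Since $1$ and $2$ are then not left-to-right maxima, these letters are exactly $U(\theta)^{-1}(1)$ and $U(\theta)^{-1}(2)$. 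Hence $D_1(\I(U(\theta)))=D_{n-1}(\theta)$, with only the boundary cases $\theta(n)=n$ and $\theta(n-1)\in\{n-1,n\}$ left to check by hand.

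So successor data of $\theta$ pairs with \emph{predecessor} data of $U(\theta)$. Computing the successors $\rho(1),\rho(2)$, as you planned, is precisely what creates the wrap-around cases you flagged.

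With the paper's $f$, the telescoping equation becomes $D_1(\I(\pi))+D_1(\tau(\pi))=D_{n-1}(\pi)+D_1(\I(\tau(\pi)))$. Using $U\circ U=\mathrm{id}$, $D_1(\R(\theta))=1-D_{n-1}(\theta)$ and $D_1(\I(\theta))+D_1(\I(\R(\theta)))=1$, both sides reduce to $1$ by the identity above.

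There is also a small slip: $D_1(\tau\sigma)=[\rho(n)>\rho(n-1)]$, not $[\rho(n)<\rho(n-1)]$.
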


\begin{theorem}
    \label{thm.mainFixEnds}
    For $n \geq 1$, the statistic $\Fix_1 - \Fix_n$ is $0$-mesic under the actions of
    \begin{enumerate}
        \item reversal-rotated inverse ($\D \circ \F^{-1} \circ \R \circ \F$),
        \item rotated inverse-reversal ($\R \circ \F^{-1} \circ \D \circ \F$),
        \item complement-rotated inverse ($\D \circ \F^{-1} \circ \C \circ \F$), and
        \item rotated inverse-complement ($\C \circ \F^{-1} \circ \D \circ \F$).
    \end{enumerate}

    \hyperlink{pf.fix_ends}{\emph{Jump to proof.}}
\end{theorem}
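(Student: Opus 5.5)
The plan is to use the telescoping strategy of Lemma~\ref{lem.telescope_strategy}. For each of the four actions $\tau$, I will find a function $f : S_n \to \mathbb{Z}$ with
$$\Fix_1(\sigma) - \Fix_n(\sigma) = f(\sigma) - f(\tau(\sigma)) \quad\text{for all } \sigma\in S_n .$$
Summing this identity over a $\tau$-orbit makes the right side telescope to $0$, which gives $0$-mesy.

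The basic tool is a dictionary translating $\Fix_1$ and $\Fix_n$ into conditions on the end letters of $\theta = \F(\sigma)$. Since the CCD orders cycles by their maxima:
\begin{itemize}
\item $\sigma(1)=1$ holds exactly when $(1)$ is the first cycle of the CCD. Since $1$ is the smallest possible cycle maximum, this is equivalent to $\theta_1 = 1$.
\item $\sigma(n)=n$ holds exactly when $(n)$ is the last cycle, i.e.\ $\theta_n = n$.
\end{itemize}
So $\Fix_1(\sigma) = [\theta_1 = 1]$ and $\Fix_n(\sigma)=[\theta_n=n]$.

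Next I track end letters through the dihedral involutions.
\begin{itemize}
\item $\C$ sends the end letters $a, b$ of a word to $n+1-a$ and $n+1-b$.
\item $\R$ swaps the two end letters.
\item $\D$ interchanges the events $\rho(1)=1$ and $\rho(n)=n$, since $\D(\rho)(i) = n+1-\rho^{-1}(n+1-i)$. Hence $\Fix_1(\D\rho) = \Fix_n(\rho)$ and $\Fix_n(\D\rho)=\Fix_1(\rho)$.
\end{itemize}

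Apply this to item 3, $\tau = \D\circ\F^{-1}\circ\C\circ\F$, with $\rho = \F^{-1}(\C\theta)$. Using the dictionary again on $\rho$,
$$\Fix_1(\tau\sigma) = [\theta_n = 1], \qquad \Fix_n(\tau\sigma) = [\theta_1 = n].$$
The analogous computation with $\R$ in place of $\C$ gives item 1. For items 2 and 4, $\D$ is applied before $\F^{-1}$, so I also need how $\D$ moves the end letters of an OLN. I will compute this as the letters $n+1-\theta^{-1}(n)$ and $n+1-\theta^{-1}(1)$: the first and last letters of $\D\theta$ are controlled by the positions of $n$ and $1$ in $\theta$.

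The telescoping function will be built from a small number of such "end-letter / extreme-letter" indicators of $\F(\sigma)$. Examples are $[\theta_1 = n]$ (i.e.\ $\sigma$ is an $n$-cycle), $[\theta_n = 1]$, and $[\theta_1 = 1]$. Concretely, I will take $f$ to be an integer combination
$$f(\sigma) = \alpha[\theta_1=1] + \beta[\theta_n=n] + \gamma[\theta_1=n] + \delta[\theta_n=1],$$
possibly extended by the indicators "$\theta_1$ or $\theta_n$ equals the position of $1$ or $n$", as forced by the $\D$ cases. I will solve for the coefficients by matching $f(\sigma)-f(\tau\sigma)$ against $[\theta_1=1]-[\theta_n=n]$. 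I expect the same $f$, or $f$ composed with $\D$, to serve for several of the four actions, giving the "shared telescope" phenomenon mentioned in the introduction.

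I also record a reduction: homomesy under $\tau$ is equivalent to homomesy under $\tau^{-1}$. If some of the four maps turn out to be conjugate, or inverse-conjugate, to one another by an involution that preserves $\Fix_1-\Fix_n$ (or negates it, which is harmless for the value $0$), this will cut down the casework.

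The main obstacle is computing $f(\tau\sigma)$. This requires knowing the end letters of $\F(\tau\sigma)$, which means pushing the permutation $\B(\rho)$ back through $\F$. The end letters of $\F(\B\rho)$ are governed by the first and last cycles of $\B\rho$, and these are not determined by the end letters of $\rho$ alone. I will handle this by restricting attention to the events that actually matter, namely "$1$ is a fixed point", "$n$ is a fixed point", and "$\sigma$ is a single cycle". For each event I will do a direct case check of how it transforms under $\F^{-1}$ composed with the relevant dihedral map. If a small ansatz for $f$ fails, I will enlarge it with the next natural statistic, such as $[\theta_1 = n]$ combined with a descent indicator at an end.
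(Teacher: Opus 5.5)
There is a genuine gap. Your framework is the paper's: Lemma~\ref{lem.telescope_strategy} with $f$ built from end-position indicators. Your four-term ansatz even contains the functions the paper uses. But you never fix the coefficients or verify a single telescoping identity, and that is the entire content of the proof.

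Worse, the concrete plan in your last paragraph provably fails for the two maps in which $\D$ is applied first. That plan tracks only ``$1$ is fixed'', ``$n$ is fixed'' and ``$\sigma$ is a single cycle'', and drops $[\theta_n=1]$. Take $\tau=\R\circ\F^{-1}\circ\D\circ\F$ and $n=3$; one computes $312\mapsto 213\mapsto 231$. Any $f$ depending only on those three events has $f(312)=f(231)$, since both are $3$-cycles fixing neither $1$ nor $3$. But telescoping with $s=\Fix_1-\Fix_n$ forces $f(312)-f(231)=s(312)+s(213)=0+(-1)=-1$. The same happens along $231\mapsto 213\mapsto 312$ for $\C\circ\F^{-1}\circ\D\circ\F$. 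For these maps some quantities your matching would need are not among your indicators at all; for example, $\Fix_1(\tau\sigma)=[\theta_1=\theta_n+1]$ for $\tau=\R\circ\F^{-1}\circ\D\circ\F$.

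The missing ingredient is the indicator $[\sigma(1)=n]$. In your dictionary this is $[\theta_n=1]$: the last cycle of the CCD begins with $n$, and $\theta_n=1$ says exactly that $1$ closes it. With this indicator, plus $[\D\rho(1)=n]=[\rho(1)=n]$ and the end-letter rules you already have, each map is a two-line chase:
\begin{itemize}
\item For $\D\circ\F^{-1}\circ\C\circ\F$ and $\C\circ\F^{-1}\circ\D\circ\F$, one gets $\Fix_1(\tau\sigma)=[\sigma(1)=n]$ and $[\tau\sigma(1)=n]=\Fix_n(\sigma)$. So $f=\Fix_1+[\sigma(1)=n]$ telescopes.
\item For $\D\circ\F^{-1}\circ\R\circ\F$ and $\R\circ\F^{-1}\circ\D\circ\F$, one gets $[\tau\sigma(1)=n]=\Fix_1(\sigma)$ and $\Fix_n(\tau\sigma)=[\sigma(1)=n]$. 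So $f=-\Fix_n-[\sigma(1)=n]$ telescopes.
\end{itemize}
These are exactly the paper's choices ($\alpha=\delta=1$, resp.\ $\beta=\delta=-1$, in your notation), and they realize the shared telescopes you anticipated.

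Your single-cycle idea is not useless. For the two maps where $\D$ is applied last it gives alternative telescoping functions: $f=\Fix_1+[\cyc=1]$ for $\D\circ\F^{-1}\circ\R\circ\F$, and $f=-\Fix_n-[\cyc=1]$ for $\D\circ\F^{-1}\circ\C\circ\F$. This works because for the former $\Fix_1(\tau\sigma)=[\cyc(\sigma)=1]$ and $[\cyc(\tau\sigma)=1]=\Fix_n(\sigma)$. For the latter, $\Fix_n(\tau\sigma)=[\cyc(\sigma)=1]$ and $[\cyc(\tau\sigma)=1]=\Fix_1(\sigma)$.
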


\section{The telescope method}

Our proofs of the above claims are based on a common set of basic methods set out by the following lemmas. Based on the coboundary condition given by Propp in \cite[Proposition 2.4]{Propp2021spectral} for $0$-mesic statistics, we give a simple generalization to handle the $c$-mesic situation with no conditions on our function $f$ (Propp uses $g$ for the same purpose). A similar technique is employed by Defant, Hopkins, Poznanovi\'c, and Propp in \cite{DefantHopkinsPoznanovicPropp2023} to prove homomesy results where they note the use of a coboundary condition which does not exactly match the one given by Propp. We formalize the way this comes about in Remark \ref{rmk.telescope_fn}.

\begin{lemma}
    \label{lem.telescope_strategy}
    Let $X$ be a set, $s: X \to \mathbb{C}$ a statistic, $c \in \mathbb{C}$ a constant, and $\tau: X \to X$ an action on $X$.
    If there exists a function $f : X \to \mathbb{C}$ satisfying
    \begin{align}
        s(x) &= f(x) - f(\tau(x)) + c
        \label{eq.telescope}
    \end{align}
    for all $x \in X$, then the statistic $s$ is $c$-mesic under the action of $\tau$.
\end{lemma}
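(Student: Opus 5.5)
The plan is to prove the statement by summing the identity \eqref{eq.telescope} around a single orbit and letting the $f$-terms telescope away.

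First, fix an arbitrary $\tau$-orbit $\mathcal{O}$ in $X$. By the definition of an action, $\mathcal{O}$ is finite, say $|\mathcal{O}| = m \geq 1$. Pick any $x_0 \in \mathcal{O}$ and set $x_i := \tau^i(x_0)$. Since $\tau$ is invertible and the orbit is finite, the elements $x_0, x_1, \ldots, x_{m-1}$ are exactly the elements of $\mathcal{O}$, each listed once, and $x_m = \tau^m(x_0) = x_0$. This is the only point needing a little care: we must use invertibility of $\tau$ to guarantee that the orbit really is a cycle returning to $x_0$, rather than a tail that runs into a cycle. (Finiteness plus injectivity rules out the tail case.)

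Next, apply \eqref{eq.telescope} at each point $x_i$ with $0 \leq i \leq m-1$, and sum the resulting equations:
\begin{align*}
    \sum_{x \in \mathcal{O}} s(x) = \sum_{i=0}^{m-1} s(x_i) = \sum_{i=0}^{m-1} \bigl( f(x_i) - f(x_{i+1}) \bigr) + mc .
\end{align*}
The middle sum telescopes to $f(x_0) - f(x_m)$, which is $0$ because $x_m = x_0$. Dividing by $m = |\mathcal{O}|$ shows that the average of $s$ over $\mathcal{O}$ equals $c$. Since the orbit $\mathcal{O}$ was arbitrary, $s$ is $c$-mesic under $\tau$ in the sense of Definition \ref{def.homomesy}.

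No step here is a genuine obstacle. The argument is routine, and the only subtlety is the cyclic indexing of the orbit described in the first step.
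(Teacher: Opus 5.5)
Your proof is correct and takes essentially the same approach as the paper: sum the identity over an orbit and let the $f$-terms cancel. The only cosmetic difference is that you list the orbit explicitly as $x_0, \tau(x_0), \ldots, \tau^{m-1}(x_0)$, so the sum telescopes literally, whereas the paper rewrites $\sum_{x \in \mathcal{O}} f(\tau(x))$ as $\sum_{x \in \mathcal{O}} f(x)$ using that $\tau$ permutes $\mathcal{O}$.
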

\begin{proof}
    For every $\tau$-orbit $\mathcal{O}$, we have
    \begin{align*}
        \frac{1}{|\mathcal{O}|}\sum_{x \in \mathcal{O}} s(x) &= \frac{1}{|\mathcal{O}|}\sum_{x \in \mathcal{O}} (f(x) - f(\tau(x)) + c) \\
        &= \frac{1}{|\mathcal{O}|}\left(|\mathcal{O}|c + \sum_{x \in \mathcal{O}} f(x) - \sum_{x \in \mathcal{O}} f(\tau(x))\right) \\
        &= c + \frac{1}{|\mathcal{O}|}\left(\sum_{x \in \mathcal{O}} f(x) - \sum_{\tau^{-1}(x) \in \mathcal{O}} f(\tau(\tau^{-1}(x)))\right) \\
        &= c + \frac{1}{|\mathcal{O}|}\left(\sum_{x \in \mathcal{O}} f(x) - \sum_{x \in \mathcal{O}} f(x)\right) \\
        &= c .
        \qedhere
    \end{align*}
\end{proof}

\begin{remark}
    \label{rmk.telescope_fn}
    For a given statistic $s$ and an action $\tau$ where $s$ is $c$-mesic under the action of $\tau$, the function $f$ used in Lemma \ref{lem.telescope_strategy} both is guaranteed to exist and is unique up to an offsetting constant for each orbit.
\end{remark}
\begin{proof}
    To see the existence, let $\Omega$ be the set of orbits under the action of $\tau$.
    For a given orbit
    \[
    \mathcal{O} = \tup{x_1 \overset{\tau}{\mapsto} x_2 \overset{\tau}{\mapsto} \cdots \overset{\tau}{\mapsto} x_{k-1} \overset{\tau}{\mapsto} x_k \overset{\tau}{\mapsto} x_1},
    \]
    the telescoping condition \eqref{eq.telescope} holds for all $x$ in this orbit if and only if the $k$ equations
    \begin{align*}
        f(x_1) - f(x_2) &= s(x_1) - c, \\
        f(x_2) - f(x_3) &= s(x_2) - c, \\
        &\vdots \\
        f(x_{k-1}) - f(x_k) &= s(x_{k-1}) - c, \\
        f(x_k) - f(x_1) &= s(x_k) - c
    \end{align*}
    hold. These form a system of $k$ equations in the $k$ variables $f(x_1), f(x_2), \ldots, f(x_k)$.
    These $k$ equations sum to $0 = 0$, since their left-hand sides sum to $0$ by telescoping and their right-hand sides sum to
    \begin{align*}
        \sum_{i=1}^k (s(x_i) - c)
        = \sum_{i=1}^k s(x_i) - kc
        = \sum_{x \in \mathcal{O}} s(x) - kc
        = 0
    \end{align*}
    as well (because the definition of $c$-mesy yields $\dfrac{1}{\abs{\mathcal{O}}} \sum_{x \in \mathcal{O}} s(x) = c$, so that $\sum_{x \in \mathcal{O}} s(x) = \abs{\mathcal{O}} c = kc$).
    Thus, in order to solve this system, we only need to solve its first $k-1$ equations, which we can do recursively, starting with an arbitrary choice of $f(x_1)$ and then solving the first equation for $f(x_2)$, the second for $f(x_3)$, and so on.
    The last equation will then automatically hold, since the sum of all $k$ equations is $0=0$.

    So we can always produce our desired function $f$.
    Moreover, in constructing the values $f(x_1), f(x_2), \ldots, f(x_k)$, we had a free choice of $f(x_1)$, which affects the remaining values by an additive offset.
    That is, for any given solution for $f(x_1), \dots, f(x_k)$, for any $a \in \mathbb{C}$, we have another solution
    \begin{align*}
        g(x_1) &= f(x_1) + a, \\
        g(x_2) &= f(x_2) + a, \\
        &\vdots \\
        g(x_k) &= f(x_k) + a,
    \end{align*}
    and there are no other free choices (since our $k$ equations uniquely determine each $f(x_i)$ for any given $f(x_1)$).
    Hence, up to a choice of offset for each orbit, $f$ exists uniquely.
\end{proof}

\begin{definition}
    Let $X$ be a set, $s: X \to \mathbb{C}$ a statistic. For a set of actions on $X$ paired with constants $\{(\tau_i, c_i) : i \in [m]\}$, we say those actions $\tau_1, \dots, \tau_m$ are \emph{$s$-compatible} via $f: X \to \mathbb{C}$ if
    \begin{align*}
        s(x) &= f(x) - f(\tau_i(x)) + c_i
    \end{align*}
    for all $x \in X$ and all $i \in [m]$.

    It follows immediately from Lemma \ref{lem.telescope_strategy} that $s$ is $c_i$-mesic under the action of $\tau_i$ for all $i \in [m]$.
\end{definition}

\begin{corollary}
    \label{cor.compat}
    For a set $X$ with $s$-compatible actions $\tau_1, \dots, \tau_m$ via $f$, define the labeled directed multigraph $G$ with vertices $X$ and edges labeled $\tau_i$ connecting each vertex $x$ to $\tau_i(x)$. For any finite closed walk on $G$ of length $l > 0$ traversing $a_i$ edges labeled $\tau_i$, the average value taken by $s$ over the vertices visited by the walk (with multiplicity if a vertex is visited repeatedly) is given by
    \begin{align*}
        \frac{\sum_{i=1}^m a_i c_i}{\sum_{j=1}^m a_j}.
    \end{align*}
\end{corollary}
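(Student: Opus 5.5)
The plan is to telescope along the walk, exactly as in the proof of Lemma \ref{lem.telescope_strategy}, but using a different action at each step. Write the closed walk as a sequence of vertices and labels
\[
x_0 \xrightarrow{\tau_{i_1}} x_1 \xrightarrow{\tau_{i_2}} x_2 \to \cdots \xrightarrow{\tau_{i_l}} x_l = x_0,
\]
so that $x_k = \tau_{i_k}(x_{k-1})$ for each $k \in [l]$. The vertices visited, counted with multiplicity, are $x_0, x_1, \dots, x_{l-1}$. There are $l$ of them, and $l = \sum_{j=1}^m a_j$, since each step traverses exactly one labeled edge.

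First, apply the $s$-compatibility identity at each vertex $x_{k-1}$ with the action $\tau_{i_k}$ that labels the outgoing edge. This gives
\[
s(x_{k-1}) = f(x_{k-1}) - f(x_k) + c_{i_k}.
\]
Compatibility provides this identity for every $x \in X$ and every $i \in [m]$, so it holds no matter which label was chosen at that step.

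Next, sum these identities over $k = 1, \dots, l$. The $f$-terms telescope to $f(x_0) - f(x_l)$, and this is $0$ because the walk is closed. The constant terms add up to $\sum_{k=1}^l c_{i_k}$. Grouping these by label, the constant $c_i$ appears exactly $a_i$ times, so the sum equals $\sum_{i=1}^m a_i c_i$.

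Finally, divide by the number of visited vertices, $l = \sum_j a_j > 0$, to obtain the stated average.

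There is no real obstacle in this argument. The only points requiring care are bookkeeping conventions: each vertex should be counted once per departure, so the starting vertex is not counted twice, and the $s$-compatibility identity must be applied with the label of the edge leaving $x_{k-1}$, not the edge entering it.
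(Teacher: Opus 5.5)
Your proposal is correct and follows essentially the same argument as the paper: apply the compatibility identity with the label of each outgoing edge, sum along the closed walk so the $f$-terms telescope to zero, group the constants by label, and divide by $l=\sum_j a_j$. Your indexing $x_0,\dots,x_{l-1}$ matches the paper's $x_1,\dots,x_l$ with $x_{l+1}=x_1$, and it counts each vertex once per departure in the same way.
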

\begin{proof}
    Let such a closed walk alternating between the vertices visited and the edges taken be given by $x_1, \tau_{i_1}, x_2, \tau_{i_2}, \dots, \tau_{i_l}, x_{l+1}=x_1$ where $l = \sum_{j=1}^{m} a_j$. Then
    \begin{align*}
        \frac{1}{l}\sum_{j=1}^{l} s(x_j) &= \frac{1}{l}\sum_{j=1}^{l} (f(x_j) - f(\tau_{i_j}(x_{j})) + c_{i_j}) \\
        &= \frac{1}{l}\sum_{j=1}^l (c_{i_j} + f(x_j) - f(x_{j+1})) \\
        &= \frac{1}{l}\sum_{j=1}^l c_{i_j} \\
        &= \frac{1}{l}\sum_{i=1}^m a_i c_i \\
        &= \frac{\sum_{i=1}^m a_i c_i}{\sum_{j=1}^m a_j} .
    \end{align*}
\end{proof}

\begin{remark}
    It has previously been remarked by Elder et al. \cite[Lemma 3.11]{ElderEtAl2024} that for a given set $X$, two statistics, $s_1,s_2: X \to \mathbb{C}$, and an action $\tau: X \to X$, if $s_1$ is $c_1$-mesic under $\tau$ and $s_2$ is $c_2$-mesic under $\tau$, then linear combinations of $s_1$ and $s_2$ are also homomesic. In particular, for any $a,b \in \mathbb{C}$, $as_1 + bs_2$ is $(ac_1 + bc_2)$-mesic under $\tau$.
\end{remark}

\section{Proofs}
\label{mainpfs}

We now come to the proofs of Theorem \ref{thm.mainFix}, Theorem \ref{thm.mainRtL}, Theorem \ref{thm.mainD}, and Theorem \ref{thm.mainFixEnds}.

\subsection{Proof of Theorem \ref{thm.mainFix}}

\begin{proof}[\hypertarget{pf.fix_full}{\textbf{Proof of Theorem \ref{thm.mainFix}}}]
    To prove that $\Fix$ is $1$-mesic under the action of complement-inversion, complement-rotation, reversal-rotation, and reversal-inversion, by Lemma \ref{lem.telescope_strategy}, it will be sufficient to prove that for all $\pi \in S_n$ and $\tau$ as any of the aforementioned actions,
    \begin{align}
        \Fix(\pi) &= -\exc(\pi) + \exc(\tau(\pi)) + 1 \label{eq.fix_exc_id}.
    \end{align}

    We will approach this by decomposing each of our actions into their constituent maps and follow how those maps affect what is being counted. That is, our actions decompose as follows:
    \begin{align*}
        \text{complement-inversion: } &\I \circ \F^{-1} \circ \C \circ \F, \\
        \text{complement-rotation: } &\Qsq \circ \F^{-1} \circ \C \circ \F, \\
        \text{reversal-rotation: } &\Qsq \circ \F^{-1} \circ \R \circ \F, \\
        \text{reversal-inversion: } &\I \circ \F^{-1} \circ \R \circ \F.
    \end{align*}

    Then by the lemmas that will follow, we compute for complement-inversion that
    \begin{align*}
        &-\exc(\pi) + \exc(\I \circ \F^{-1} \circ \C \circ \F(\pi)) + 1 \\
        &= -\exc(\pi) + \anxc(\I \circ \I \circ \F^{-1} \circ \C \circ \F(\pi)) + 1 & \text{by Lemma } \ref{lem.exc_inverse_rotation} \\
        &= -\exc(\pi) + \anxc(\F^{-1} \circ \C \circ \F(\pi)) + 1 \\
        &= -\exc(\pi) + \des(\F \circ \F^{-1} \circ \C \circ \F(\pi)) + 1 &\text{by Lemma } \ref{lem.anxc_foata}\\
        &= -\exc(\pi) + \des(\C \circ \F(\pi)) + 1 \\
        &= -\exc(\pi) + \asc(\C \circ \C \circ \F(\pi)) + 1 &\text{by Lemma } \ref{lem.des_complement_reversal} \\
        &= -\exc(\pi) + \asc(\F(\pi)) + 1 \\
        &= -\exc(\pi) + \Fix(\pi) + \exc(\pi) - 1 + 1 &\text{by Lemma } \ref{lem.asc_foatainv} \\
        &= \Fix(\pi) .
    \end{align*}

    The other actions are verified by similar calculations (replacing the constituent maps of the actions and invoking the appropriate half of the lemma), so we now prove the lemmas these calculations rely on.
\end{proof}

\begin{lemma}
    \label{lem.exc_inverse_rotation}
    For every $\pi \in S_n$, $\exc(\pi) = \anxc(\I(\pi)) = \anxc(\Qsq(\pi))$.
\end{lemma}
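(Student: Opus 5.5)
Both equalities follow from explicit bijections between index sets, so the plan is to write down each map and check that it sends excedances of $\pi$ to antiexcedances of the transformed permutation.

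\emph{First equality, $\exc(\pi) = \anxc(\I(\pi))$.} Write $\sigma = \I(\pi) = \pi^{-1}$. I will use the map $i \mapsto \pi(i)$ on $[n]$. If $i$ is an excedance of $\pi$, set $j = \pi(i)$, so $j > i$. Then $\sigma(j) = i < j$, so $j$ is an antiexcedance of $\sigma$. Conversely, if $j$ is an antiexcedance of $\sigma$, then $i = \sigma(j) < j = \pi(i)$, so $i$ is an excedance of $\pi$. These two assignments are mutually inverse, since $\pi$ and $\sigma$ are, so they give a bijection between the two index sets. This proves the first equality.

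\emph{Second equality, $\exc(\pi) = \anxc(\Qsq(\pi))$.} First I compute $\Qsq$ explicitly. Since $\R$ sends $\sigma$ to $i \mapsto \sigma(n+1-i)$ and $\C$ sends $\sigma$ to $i \mapsto n+1-\sigma(i)$, we get
\[
\Qsq(\pi)(i) = n+1-\pi(n+1-i).
\]
Now take $i' = n+1-i$. The condition $\pi(i) > i$ becomes
\[
\Qsq(\pi)(i') = n+1-\pi(i) < n+1-i = i',
\]
so $i'$ is an antiexcedance of $\Qsq(\pi)$. The map $i \mapsto n+1-i$ is an involution of $[n]$, so it is a bijection between the excedances of $\pi$ and the antiexcedances of $\Qsq(\pi)$. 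This proves the second equality.

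There is no real obstacle here. The only step needing care is getting the composition order in $\Qsq = \C \circ \R$ right when writing out $\Qsq(\pi)(i)$. Because $\C$ and $\R$ commute, the formula comes out the same either way.

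As a by-product, the same maps show that fixed points go to fixed points under both $\I$ and $\Qsq$. Hence weak excedances of $\pi$ correspond to weak antiexcedances of $\I(\pi)$ and of $\Qsq(\pi)$. This version may be convenient when the lemma is applied elsewhere in the proof of Theorem \ref{thm.mainFix}.
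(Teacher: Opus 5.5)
Your proof is correct and follows essentially the same route as the paper: the bijection $i \mapsto \pi(i)$ from excedances of $\pi$ to antiexcedances of $\I(\pi)$, and the bijection $i \mapsto n+1-i$ from excedances of $\pi$ to antiexcedances of $\Qsq(\pi)$, using $\Qsq(\pi)(n+1-i) = n+1-\pi(i)$. Your inequality rewriting $n+1-\pi(i) < n+1-i \iff \pi(i) > i$ is a genuine equivalence, which gives the converse direction needed for the bijection, and the closing remark about fixed points is true but not needed.
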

\begin{proof}
    For every excedance $i$ of $\pi$, by definition $\pi(i) > i$. Thus, $\pi^{-1}(\pi(i)) = i < \pi(i)$, and therefore $\pi(i)$ is an antiexcedance of $\pi^{-1}$.
    
    Conversely, for every antiexcedance $j$ of $\pi^{-1}$, by definition $\pi^{-1}(j) < j = \pi(\pi^{-1}(j))$, which shows that $\pi^{-1}(j)$ is an excedance of $\pi$.
    
    Hence the excedances of $\pi$ are in bijection with the antiexcedances of $\pi^{-1}$; so we can conclude that
    \[
    \exc(\pi) = \anxc(\I(\pi)).
    \]

    A number $i \in [n]$ is an excedance of $\pi$ if and only if $n+1-i$ is an antiexcedance of $\Qsq(\pi)$. This is because the definition of $\Qsq$ shows that $\Qsq(\pi)(n+1-i) = n+1-\pi(i)$, which is $< n+1-i$ if and only if $\pi(i)>i$.
    %    once again we note that $\pi(i) > i$. In OLN, this translates to having $\pi(i)$ in the $i$th position. If we consider the OLN of $\C(\pi)$, then the $i$th position now has $n - \pi(i) + 1$. Therefore the OLN of $\R(\C(\pi)) = \Qsq(\pi)$ has $n - \pi(i) + 1$ in the $(n - i + 1)$th position. If $\pi(i) > i$, then $n - \pi(i) + 1 < n - i + 1$, so $\Qsq(\pi)$ has an antiexcedance in position $n - i + 1$.
    %For every antiexcedance $j$ of $\pi$, we have $\pi(j) < j$. In OLN, this means that $\pi(j)$ is in the $j$th position. The OLN of $\C(\pi)$ then has $n - \pi(j) + 1$ in the $j$th position and the OLN of $\R(\C(\pi)) = \Qsq(\pi)$ has $n - \pi(j) + 1$ in the $(n - j + 1)$th position. Since $\pi(j) < j$, it must be that $n - \pi(j) + 1 > n - j + 1$, so $\Qsq(\pi)$ has an excedance in position $n - j + 1$.

    Thus the excedances of $\pi$ are also in bijection with the antiexcedances of $\Qsq(\pi)$; so we have shown that
    \[
    \exc(\pi) = \anxc(\Qsq(\pi)). \qedhere
    \]
\end{proof}

\begin{lemma}
    \label{lem.anxc_foata}
    For every $\pi \in S_n$, $\anxc(\pi) = \des(\F(\pi))$.
\end{lemma}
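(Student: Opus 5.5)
We compare the antiexcedances of $\pi$ with the descents of $\F(\pi)$ by writing $\pi$ in its CCD and noting that $\F$ only erases the parentheses. Write the CCD as $(c_{1,1}, \dots, c_{1,\ell_1})(c_{2,1}, \dots, c_{2,\ell_2}) \cdots (c_{k,1}, \dots, c_{k,\ell_k})$. Each cycle begins with its largest element, and the leading elements increase: $c_{1,1} < c_{2,1} < \cdots < c_{k,1}$. Then $\F(\pi)$ is the concatenated word $w = c_{1,1} \cdots c_{1,\ell_1} c_{2,1} \cdots c_{k,\ell_k}$. Every adjacent pair in $w$ is of exactly one of two kinds: a \emph{within-cycle} pair $(c_{j,m}, c_{j,m+1})$, or a \emph{boundary} pair $(c_{j,\ell_j}, c_{j+1,1})$.

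\textbf{Boundary pairs.} These are never descents. The element $c_{j,\ell_j}$ lies in cycle $j$, so $c_{j,\ell_j} \le c_{j,1}$. Since the leaders increase, $c_{j,1} < c_{j+1,1}$. Hence $c_{j,\ell_j} < c_{j+1,1}$, which is an ascent.

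\textbf{Within-cycle pairs.} A within-cycle pair satisfies $\pi(c_{j,m}) = c_{j,m+1}$. It is a descent of $w$ exactly when $\pi(c_{j,m}) < c_{j,m}$, that is, when $c_{j,m}$ is an antiexcedance of $\pi$. So the descents of $\F(\pi)$ correspond bijectively, via $c_{j,m} \mapsto$ its position in $w$, to those elements $i$ of $\pi$ that are antiexcedances and are \emph{not} the last entry of their cycle.

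\textbf{Last entries.} It remains to show that no last entry $c_{j,\ell_j}$ is an antiexcedance. We have $\pi(c_{j,\ell_j}) = c_{j,1}$, the maximum of its cycle, so $\pi(c_{j,\ell_j}) \ge c_{j,\ell_j}$. Equality occurs precisely for fixed points, which are not antiexcedances. Therefore every antiexcedance of $\pi$ is a non-last entry of its cycle, and the correspondence above is a bijection between all antiexcedances of $\pi$ and all descents of $\F(\pi)$. This gives $\anxc(\pi) = \des(\F(\pi))$.

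The only real subtlety is this bookkeeping at cycle boundaries: a wrap-around step of $\pi$ is never an antiexcedance, and a junction between consecutive cycles in $w$ is never a descent. Both facts follow from the two normalization rules of the CCD. The rest is a direct translation of the definitions.
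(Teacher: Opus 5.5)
Your proof is correct and follows essentially the same route as the paper: the wrap-around step of a cycle is never an antiexcedance because each cycle starts with its maximum, a junction between consecutive cycles is never a descent, and within-cycle adjacent pairs are descents exactly when their first entry is an antiexcedance. The only cosmetic difference is that you justify the boundary case directly via $c_{j,\ell_j} \le c_{j,1} < c_{j+1,1}$, whereas the paper observes that the first entry of each cycle is a left-to-right maximum of $\F(\pi)$; these are the same fact.
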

\begin{proof}
    In the CCD of $\pi$, each antiexcedance $i$ corresponds to a shape of either $(\dots, i, \pi(i), \dots)$ where $i$ and $\pi(i)$ are in the main body of the cycle, or $(\pi(i), \dots, i)$ where $i$ and $\pi(i)$ are at the ends of the cycle.
    The second case is impossible in CCD, since we know that $\pi(i) < i$ while by definition of the CCD each cycle must start with its largest element, a contradiction.
    Hence, each antiexcedance $i$ must appear in a $(\dots, i, \pi(i), \dots)$ shape in the CCD of $\pi$.
    Hence, when we remove the parentheses and read the resulting string as OLN (applying $\F$), each antiexcedance becomes a descent of $\F(\pi)$.

    Conversely, any descent of $\F(\pi)$ comes from an antiexcedance of $\pi$. Indeed, a descent of $\F(\pi)$ must be a $j \in \ive{n-1}$ such that $\F(\pi)(j) > \F(\pi)(j+1)$.
    If $\F(\pi)(j)$ and $\F(\pi)(j+1)$ belong to the same cycle in the CCD of $\pi$, then $\F(\pi)(j+1)$ follows $\F(\pi)(j)$ on this cycle, and thus $\F(\pi)(j) > \F(\pi)(j+1)$ shows that $\F(\pi)(j)$ is an antiexcedance of $\pi$.
    If they don't, then $\F(\pi)(j)$ must be the last element of a cycle, while $\F(\pi)(j+1)$ starts the next cycle; hence, $\F(\pi)(j+1)$ must be a left-to-right maximum of $\F(\pi)$ (by the construction of $\F^{-1}$). But this is impossible, since $\F(\pi)(j) > \F(\pi)(j+1)$.
    So they do belong to the same cycle, and thus come from an antiexcedance of $\pi$.
    
    Thus we have a bijection between the antiexcedances of $\pi$ and the descents of $\F(\pi)$, implying that
    \[
    \anxc(\pi) = \des(\F(\pi)). \qedhere
    \]
\end{proof}

\begin{lemma}
    \label{lem.des_complement_reversal}
    For every $\pi \in S_n$, $\des(\pi) = \asc(\C(\pi)) = \asc(\R(\pi))$.
\end{lemma}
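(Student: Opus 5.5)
The plan is to exhibit, for each of the two identities, an explicit bijection between the descent positions of $\pi$ and the ascent positions of the transformed permutation. Both arguments work directly from the one-line notation and the definitions of $\C$ and $\R$, in the same style as the proof of Lemma \ref{lem.exc_inverse_rotation}.

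\textbf{Complement.} I will use the formula $\C(\pi)(i) = n+1-\pi(i)$ for every $i \in [n]$. For $i \in [n-1]$, we have
\[
\pi(i) > \pi(i+1) \iff n+1-\pi(i) < n+1-\pi(i+1) \iff \C(\pi)(i) < \C(\pi)(i+1).
\]
So position $i$ is a descent of $\pi$ exactly when it is an ascent of $\C(\pi)$. The identity map on $[n-1]$ is therefore the bijection, and $\des(\pi) = \asc(\C(\pi))$.

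\textbf{Reversal.} Here I will use $\R(\pi)(j) = \pi(n+1-j)$. I will send a descent $i \in [n-1]$ of $\pi$ to the position $j = n-i$, which also lies in $[n-1]$. Then
\[
\R(\pi)(j) = \pi(i+1)
\qquad\text{and}\qquad
\R(\pi)(j+1) = \pi(i),
\]
so the condition $\pi(i) > \pi(i+1)$ says precisely that $\R(\pi)(j) < \R(\pi)(j+1)$. Thus $j$ is an ascent of $\R(\pi)$.

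The map $i \mapsto n-i$ is an involution of $[n-1]$, so the same computation read backwards sends ascents of $\R(\pi)$ to descents of $\pi$. This gives the bijection and hence $\des(\pi) = \asc(\R(\pi))$.

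\textbf{Edge cases and difficulty.} I do not expect any real obstacle. The only points needing care are the index bookkeeping under reversal, namely that adjacent positions $i, i+1$ become $n-i, n+1-i$ with their order swapped, and the trivial case $n \le 1$, where both counts are $0$.
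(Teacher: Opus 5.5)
Your proof is correct and follows essentially the same approach as the paper: the identity map on positions handles the complement, and the position map $i \mapsto n-i$ handles the reversal. Your ``if and only if'' formulation, together with the observation that $i \mapsto n-i$ is an involution of $[n-1]$, makes the bijectivity slightly more explicit than the paper's converse step, which instead sends ascents of $\pi$ to descents of $\R(\pi)$.
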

\begin{proof}
    For every descent $i$ of $\pi$, by definition $\pi(i) > \pi(i+1)$. Taking the complement gives $\C(\pi)(i) = n + 1 - \pi(i)$ and $\C(\pi)(i+1) = n + 1 - \pi(i+1)$. Thus $\C(\pi)(i) < \C(\pi)(i+1)$, so we have an ascent in $\C(\pi)$.

    For every ascent $j$ of $\C(\pi)$, by definition $\C(\pi)(j) < \C(\pi)(j+1)$. Taking the complement gives us $\pi(j) = n + 1 - \C(\pi)(j)$ and $\pi(j+1) = n + 1 - \C(\pi)(j+1)$. Thus $\pi(j) > \pi(j+1)$, so we have a descent in $\pi$. Hence we have a bijection between the descents of $\pi$ and the ascents of $\C(\pi)$, implying that
    \[
    \des(\pi) = \asc(\C(\pi)).
    \]

    For every descent $i$ of $\pi$, once again $\pi(i) > \pi(i+1)$. Then taking the reversal, $\R(\pi)(n-i+1) > \R(\pi)(n-(i+1)+1) = \R(\pi)(n-i)$, which is an ascent in $\R(\pi)$.
    
    For every ascent $j$ of $\pi$, we have $\pi(j) < \pi(j+1)$. Taking the reversal gives us $\R(\pi)(n-j+1) < \R(\pi)(n-(j+1)+1) = \R(\pi)(n-j)$, which is a descent in $\R(\pi)$. Hence the ascents and descents are in bijection, so
    \[
    \des(\pi) = \asc(\R(\pi)). \qedhere
    \]
\end{proof}

\begin{lemma}
    \label{lem.asc_foatainv}
    For every $\pi \in S_n$, we have $\asc(\F(\pi)) = \Fix(\pi) + \exc(\pi) - 1$.
\end{lemma}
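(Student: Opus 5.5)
We prove the identity by counting the ascents of $\F(\pi)$ directly from the CCD of $\pi$, mirroring the proof of Lemma \ref{lem.anxc_foata}. Write the CCD of $\pi$ as $C_1 C_2 \cdots C_k$ with $k = \cyc(\pi)$. The word $\F(\pi)$ is the concatenation of these cycles. Its $n-1$ adjacent pairs split into two kinds: \emph{internal} pairs, where both entries lie in the same cycle $C_j$, and \emph{boundary} pairs, where the last entry of $C_j$ is followed by the first entry of $C_{j+1}$. There are $k-1$ boundary pairs.

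\textbf{Boundary pairs.} Each boundary pair is an ascent of $\F(\pi)$. The first entry of $C_{j+1}$ is its maximum, and it exceeds every entry of $C_1,\dots,C_j$, because the cycle maxima increase from left to right. So the boundary pairs contribute exactly $\cyc(\pi)-1$ ascents.

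\textbf{Internal pairs.} An internal pair $(i,\pi(i))$ occurs exactly when $i$ is not the last entry of its cycle. The last entry $\ell$ of each cycle maps to the first entry $m$, the cycle maximum, so $\pi(\ell) = m \ge \ell$. Equality holds exactly when the cycle is a fixed point; otherwise $\ell$ is an excedance. Every other excedance $i$ of $\pi$ sits at an internal pair $(i,\pi(i))$ with $i<\pi(i)$, which is an ascent. Conversely, every internal ascent comes from such an excedance. Hence the number of internal ascents equals $\exc(\pi)$ minus the number of non-singleton cycles, which is
\[
\exc(\pi) - (\cyc(\pi) - \Fix(\pi)).
\]

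\textbf{Conclusion.} Adding the two contributions gives
\[
\asc(\F(\pi)) = \exc(\pi) - \cyc(\pi) + \Fix(\pi) + \cyc(\pi) - 1 = \Fix(\pi) + \exc(\pi) - 1 .
\]

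The only delicate step is the last-entry bookkeeping in the internal count. We must make sure that every non-singleton cycle's closing element is an excedance "lost" from the internal count, that fixed points lose nothing, and that the two kinds of pairs partition all $n-1$ adjacent positions. The degenerate case $n=1$ (or all fixed points) also needs a check, and it does work: there all pairs are boundary pairs, giving $n-1 = \Fix + 0 - 1$ ascents.
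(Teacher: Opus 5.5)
Your proof is correct, but it follows a different route from the paper's. The paper never counts ascents directly. It combines the trichotomy $n = \anxc(\pi) + \Fix(\pi) + \exc(\pi)$ with Lemma~\ref{lem.anxc_foata}, which gives $\anxc(\pi) = \des(\F(\pi))$, and the complementary count $\asc(\F(\pi)) = n - 1 - \des(\F(\pi))$, so the lemma becomes a two-line corollary of the descent lemma.

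You instead count ascents of $\F(\pi)$ from scratch. This forces you to handle the two places where ascents and excedances fail to match:
\begin{itemize}
\item the $\cyc(\pi)-1$ boundary ascents between consecutive cycles;
\item the closing excedance $\ell \mapsto m$ of each non-singleton cycle, which never appears as an adjacent pair.
\end{itemize}
The two corrections then cancel through $\cyc(\pi)$. The paper avoids this bookkeeping entirely, because descents of $\F(\pi)$ occur only inside cycles and an antiexcedance can never close a cycle.

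In exchange, your argument does not depend on Lemma~\ref{lem.anxc_foata} and is more informative. It locates the ascents exactly, and it yields an explicit bijection between the ascents of $\F(\pi)$ and the weak excedances of $\pi$ other than the last entry of the first cycle $C_1$. Internal ascents correspond to internal excedances, and the boundary ascent in front of $C_{j+1}$ is matched with the last entry of $C_{j+1}$. This explains the $-1$ combinatorially rather than arithmetically.
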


\begin{proof}
    Each $i \in \ive{n}$ is an antiexcedance, a fixed point or an excedance of $\pi$; and these three options are mutually exclusive.
    Hence, $n = \anxc(\pi) + \Fix(\pi) + \exc(\pi)$.

    The ascents of $\F(\pi)$ are the elements of $\ive{n-1}$ that are not descents of $\F(\pi)$. Thus,
    \begin{align*}
        \asc(\F(\pi))
        &= n-1-\des(\F(\pi)) & \\
        & = n-1-\anxc(\pi)
        & \text{by Lemma~\ref{lem.anxc_foata}} \\
        &= (\anxc(\pi) + \Fix(\pi) + \exc(\pi))-1-\anxc(\pi) \\
        &\qquad\qquad\qquad \text{using }n = \anxc(\pi) + \Fix(\pi) + \exc(\pi) \\
        &= \Fix(\pi) + \exc(\pi) - 1
    \end{align*}
\end{proof}

\begin{corollary}
    Complement-inversion, complement-rotation, reversal-rotation, and reversal-inversion are all $\Fix$-compatible via
\begin{align*}
    f(\pi) &= -\exc(\pi).
\end{align*}
\end{corollary}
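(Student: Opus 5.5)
The corollary asks for a single identity: for every $\pi \in S_n$ and every $\tau$ among the four actions,
\[
\Fix(\pi) = -\exc(\pi) + \exc(\tau(\pi)) + 1 .
\]
This is exactly \eqref{eq.fix_exc_id}, i.e. the definition of $\Fix$-compatibility with $f = -\exc$ and $c_i = 1$ for all four actions. The proof of Theorem \ref{thm.mainFix} checked this only for complement-inversion, so the plan is to run the same chain of rewritings for each of the other three actions. Write $\tau = \mathcal{B} \circ \F^{-1} \circ \mathcal{A} \circ \F$ with $\mathcal{A} \in \{\C, \R\}$ and $\mathcal{B} \in \{\I, \Qsq\}$.

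The chain has four steps.

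\begin{enumerate}
\item Lemma \ref{lem.exc_inverse_rotation} gives $\exc(\mathcal{B}(\sigma)) = \anxc(\mathcal{B}(\mathcal{B}(\sigma))) = \anxc(\sigma)$, using the $\I$ half or the $\Qsq$ half as appropriate. This needs $\mathcal{B}$ to be an involution, which holds for both $\I$ and $\Qsq$, since $\Qsq\circ\Qsq = \C\R\C\R = \mathrm{id}$ because $\C$ and $\R$ commute. Applying it with $\sigma = \F^{-1}(\mathcal{A}(\F(\pi)))$ gives
\[
\exc(\tau(\pi)) = \anxc(\F^{-1}(\mathcal{A}(\F(\pi)))).
\]
\item Lemma \ref{lem.anxc_foata} turns this into $\des(\mathcal{A}(\F(\pi)))$.
\item Lemma \ref{lem.des_complement_reversal}, applied to $\mathcal{A}(\F(\pi))$ together with $\mathcal{A}^2 = \mathrm{id}$, rewrites it as $\asc(\F(\pi))$. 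Here the $\C$ half is used for complement and the $\R$ half for reversal.
\item Lemma \ref{lem.asc_foatainv} gives $\asc(\F(\pi)) = \Fix(\pi) + \exc(\pi) - 1$. Substituting this back, the $\exc(\pi)$ and the constants cancel, leaving $\Fix(\pi)$.
\end{enumerate}

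The only point needing care is that the choices of $\mathcal{A}$ and $\mathcal{B}$ never interact. Each of steps 1 and 3 uses one half of a two-part lemma independently, so all four combinations go through uniformly. Once the identity holds for all four actions with the same $f$ and $c = 1$, they are $\Fix$-compatible by definition. Corollary \ref{cor.compat} then also gives the closed-walk average of $1$ for any mix of these four actions.
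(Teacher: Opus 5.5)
Your proposal is correct and takes the same route as the paper. The paper's proof of this corollary simply cites the proof of Theorem~\ref{thm.mainFix}, which establishes Equation~\eqref{eq.fix_exc_id} through the same chain of Lemmas~\ref{lem.exc_inverse_rotation}, \ref{lem.anxc_foata}, \ref{lem.des_complement_reversal}, and \ref{lem.asc_foatainv}; it writes this out only for complement-inversion and calls the other three actions similar, and your parametrization by $\mathcal{A}\in\{\C,\R\}$ and $\mathcal{B}\in\{\I,\Qsq\}$ makes that step explicit.
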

\begin{proof}
    This result immediately follows our proof of Theorem~\ref{thm.mainFix} which shows Equation~\ref{eq.fix_exc_id}.
\end{proof}

\subsection{Proof of Theorem \ref{thm.mainRtL}}

\begin{proof}[\hypertarget{pf.rtl}{\textbf{Proof of Theorem \ref{thm.mainRtL}}}]
    To prove that $\RtLmax - \RtLmin$ is $0$-mesic under the action of rotation-complement, we use Lemma \ref{lem.telescope_strategy} with $f$ defined by $f(\pi) = \RtLmax(\pi)$. Thus, we must show that for all $\pi \in S_n$, we have
    \begin{align*}
        \RtLmax(\pi) - \RtLmin(\pi) &= \RtLmax(\pi) - \RtLmax(\C \circ \F^{-1} \circ \Qsq \circ \F(\pi)) .
    \end{align*}

    Once again we'll first show the computation:
    \begin{align*}
        \RtLmax(\pi) &- \RtLmax(\C \circ \F^{-1} \circ \Qsq \circ \F(\pi)) \\
        &= \RtLmax(\pi) - \RtLmin(\C \circ \C \circ \F^{-1} \circ \Qsq \circ \F(\pi)) &\text{by Lemma \ref{lem.RtL_complement}} \\
        &= \RtLmax(\pi) - \RtLmin(\F^{-1} \circ \Qsq \circ \F(\pi)) \\
        &= \RtLmax(\pi) - \RtLmin(\F^{-1}( \Qsq( \F(\pi)))) \\
        &= \RtLmax(\pi) - \RtLmin(\pi),
    \end{align*}
    since Proposition~\ref{prop.RtL_foata_rotation} (for $\theta = \F(\pi)$) yields
    \[
    \RtLmin(\F^{-1}( \Qsq( \F(\pi)))) = \RtLmin(\F^{-1}(\F(\pi))) = \RtLmin(\pi).
    \]

    Then we prove the supporting lemmas which the computation relied on.
\end{proof}

\begin{lemma}
    \label{lem.RtL_complement}
    For every $\pi \in S_n$, $\RtLmax(\pi) = \RtLmin(\C(\pi))$.
\end{lemma}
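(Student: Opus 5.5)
The plan is to show that the set of positions of right-to-left maxima of $\pi$ coincides with the set of positions of right-to-left minima of $\C(\pi)$. Then the two counts agree, and the lemma follows, in the same style as Lemma~\ref{lem.des_complement_reversal}.

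First, fix $i \in [n]$ and recall that $\C(\pi)(j) = n+1-\pi(j)$ for every $j \in [n]$. The map $x \mapsto n+1-x$ is strictly order-reversing on $[n]$. Hence for every $j > i$,
\[
\pi(i) > \pi(j) \iff n+1-\pi(i) < n+1-\pi(j) \iff \C(\pi)(i) < \C(\pi)(j).
\]
Quantifying over all $j > i$ gives the following equivalence: $\pi(i)$ is a right-to-left maximum of $\pi$ if and only if $\C(\pi)(i)$ is a right-to-left minimum of $\C(\pi)$.

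Second, the map $i \mapsto i$ on positions is therefore a bijection from the right-to-left maxima of $\pi$ to the right-to-left minima of $\C(\pi)$. This yields $\RtLmax(\pi) = \RtLmin(\C(\pi))$.

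There is no real obstacle here. The only care needed is to phrase the definitions in terms of positions, not values, so that the bijection is literally the identity on indices. Since $\C$ is an involution, the same argument also gives the converse statement $\RtLmin(\pi) = \RtLmax(\C(\pi))$, which is the form used in the computation above when it is applied to $\C\circ\F^{-1}\circ\Qsq\circ\F(\pi)$.
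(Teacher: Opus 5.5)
Your proof is correct and is essentially the paper's argument: the order-reversing map $x \mapsto n+1-x$ turns each inequality $\pi(i) > \pi(j)$ for $j > i$ into $\C(\pi)(i) < \C(\pi)(j)$, so position $i$ carries a right-to-left maximum of $\pi$ exactly when it carries a right-to-left minimum of $\C(\pi)$. Your closing aside about the converse form is harmless but unnecessary, since the proof of Theorem~\ref{thm.mainRtL} uses the lemma exactly as stated, with $\pi$ replaced by $\C\circ\F^{-1}\circ\Qsq\circ\F(\pi)$.
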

\begin{proof}
    For every right-to-left maximum $\pi(i)$ of $\pi$, by definition we know that $\pi(j) < \pi(i)$ for all $j > i$. Taking the complement of $\pi$ replaces $\pi(i)$ by $n - \pi(i) + 1$ and replaces each $\pi(j)$ by $n - \pi(j) + 1$, so the inequality becomes $\C(\pi)(j) > \C(\pi)(i)$ for all $j > i$,
    and therefore $\C(\pi)(i)$ is a right-to-left minimum of $\C(\pi)$.

    The same argument in reverse shows that if $\C(\pi)(i)$ is a right-to-left minimum of $\C(\pi)$, then $\pi(i)$ is a right-to-left maximum of $\pi$.

    Thus we have a bijection between the right-to-left maxima of $\pi$ and the right-to-left minima of $\C(\pi)$.
    Therefore,
    \[
    \RtLmax(\pi) = \RtLmin(\C(\pi)).
    \qedhere
    \]
\end{proof}

% \begin{lemma}
%     \label{lem.exc_cell_no_rtlmin}
%     For every $\pi \in S_n$, if $k < \pi(k)$, then $\pi(k)$ is not a right-to-left minimum of $\pi$.
% \end{lemma}
% \begin{proof}
%     In such a case, there are $n - \pi(k)$ elements greater than $\pi(k)$ and there are $n - k$ positions to the right of $\pi(k)$ in the OLN of $\pi$. By the pigeonhole principle, since $n - k > n - \pi(k)$, there must exist some position to the right of $\pi(k)$ filled by an element less than $\pi(k)$. Thus $\pi(k)$ is not a right-to-left minimum of $\pi$.
% \end{proof}

\begin{lemma}
    \label{lem.exc_cell_no_rtlmin_converse}
    Let $\pi \in S_n$. If $i$ is a right-to-left minimum of $\pi$, then $\pi^{-1}(i) \geq i$.
\end{lemma}

\begin{proof}
    Let $i = \pi(j)$ be a right-to-left minimum of $\pi$.
    Then, the elements $\pi(j+1), \pi(j+2), \ldots, \pi(n)$ are all larger than $\pi(j) = i$, thus belong to the set $\set{i+1, i+2, \ldots, n}$.
    Hence, they are $n-j$ distinct elements in the $\tup{n-i}$-element set $\set{i+1, i+2, \ldots, n}$.
    This forces $n-j \leq n-i$ by the pigeonhole principle.
    Hence, $j \geq i$. That is, $\pi^{-1}(i) \geq i$, since $j = \pi^{-1}(i)$.
\end{proof}

\begin{proposition}
    \label{prop.RtL_foata_rotation}
    Let $\theta \in S_n$. Then,
    \[
    \RtLmin(\F^{-1}(\Qsq(\theta)))
    = \RtLmin(\F^{-1}(\theta)).
    \]
\end{proposition}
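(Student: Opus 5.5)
The plan is to first rewrite $\RtLmin(\F^{-1}(\theta))$ as a statistic that can be read directly off the word $\theta$, and then compare that statistic for $\theta$ and for $\Qsq(\theta)$ by an explicit matching.

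\textbf{Step 1: reduce to a statistic of the inverse.} Set $\pi=\F^{-1}(\theta)$. A value $i$ is a right-to-left minimum of $\pi$ exactly when every value $m<i$ sits to the left of $i$, that is, $\pi^{-1}(m)<\pi^{-1}(i)$ for all $m<i$. This says precisely that $\pi^{-1}(i)$ is a left-to-right maximum of the word $\pi^{-1}(1)\pi^{-1}(2)\cdots\pi^{-1}(n)$. Hence
\[
\RtLmin(\pi)=\LtRmax(\I(\pi)).
\]
Lemma~\ref{lem.exc_cell_no_rtlmin_converse} is consistent with this reformulation: at such an $i$ the running maximum of $\pi^{-1}(1),\dots,\pi^{-1}(i)$ is at least $i$.

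\textbf{Step 2: translate to the word $\theta$.} Write $\theta=\theta_1\cdots\theta_n$, cut into blocks at its left-to-right maxima. For $x\in[n]$, the value $\pi^{-1}(x)$ is the cyclic predecessor $p_\theta(x)$ of $x$ inside its block: the letter immediately to the left of $x$, or the last letter of the block if $x$ is the block's first letter (its maximum). So $\RtLmin(\F^{-1}(\theta))$ counts the values $i$ with $p_\theta(i)>p_\theta(m)$ for all $m<i$.

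I would then split these records $i$ into two kinds. If $i$ is a block-leader, then $p_\theta(i)$ is the last letter of the block, and the condition becomes a condition on that last letter. If $i$ is not a block-leader, then $p_\theta(i)=\theta_{k-1}$, where $\theta_k=i$. In both cases the aim is to restate the condition as an inequality between letters and positions of $\theta$, in the form ``$\theta_{k-1}$ exceeds every letter immediately preceding a smaller value''. It should be checked on small $n$ by computer before going further.

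\textbf{Step 3: build the bijection.} The map $\Qsq$ sends the letter $\theta_k$ to the letter $n+1-\theta_k$ at position $n+1-k$. It therefore exchanges left-to-right maxima of $\theta$ with right-to-left minima of $\Qsq(\theta)$, and turns ``immediate left neighbour'' into ``immediate right neighbour''. So I need a second description of the Step 2 statistic using right neighbours and right-to-left minima; applying $\Qsq$ to that description turns it back into the left-neighbour statistic for $\Qsq(\theta)$. To find the second description, I would compare records $i$ with adjacent pairs $(\theta_{k-1},\theta_k)$ and look for an explicit map sending each record of $\theta$ to a record of $\Qsq(\theta)$. A natural candidate sends the record $i$, with its predecessor $p_\theta(i)$, to the record $n+1-p_\theta(i)$ of $\Qsq(\theta)$. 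This would swap the roles of the two letters in the adjacent pair, and I would test it first.

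\textbf{Fallback: induction on $n$.} If Step 3 does not go through directly, I would argue by induction on $n$. Deleting $n$ from $\theta$ corresponds to deleting $1$ from $\Qsq(\theta)$ and standardizing. I would then track how the count of records changes on each side: on one side the last block loses its leader and its tail is re-split, and on the other side a single letter $1$ disappears.

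I expect the main obstacle to be Step 3, namely finding the right pairing of records across the reverse-complement. Block-leaders and non-leaders behave asymmetrically in $p_\theta$, and $\Qsq$ destroys the block structure (left-to-right maxima become right-to-left minima). The matching will therefore probably need a case split according to whether $i$ and $p_\theta(i)$ are block boundaries in $\theta$ and in $\Qsq(\theta)$, with a separate check of the boundary cases at the first and last letters.
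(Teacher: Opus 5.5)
\textbf{The gap: your Step~3 is a guess, not yet a proof.} Your setup is sound. Step~1 ($\RtLmin(\pi)=\LtRmax(\I(\pi))$) is correct, and so is Step~2 (the records are the values $i$ with $p_\theta(i)>p_\theta(m)$ for all $m<i$). Your Step~3 candidate $i\mapsto n+1-p_\theta(i)$ is in fact exactly the bijection that works. It is the reflection $(a,b)\mapsto(n+1-b,n+1-a)$ applied to the points $(p_\theta(i),i)=(\sigma^{-1}(i),i)$, which is the paper's map $\rho$. But the proposal stops there. The Step~2 restatement is left as an aim to be checked by computer, the Step~3 map is something to be tested, and the fallback induction is only sketched. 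No property of records is actually established, so the statement is not yet proved. Your worry that $\Qsq$ destroys the block structure is exactly what the missing idea removes. After a short reduction, the record condition no longer mentions blocks at all, only descent pairs and split fixed points, and $\Qsq$ transports both cleanly. In particular you do not need a second ``right-neighbour'' description: the first one, properly reduced, is already self-dual.

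\textbf{The missing reduction.} If $i$ is a record then $p_\theta(i)\ge i$, because the $i$ distinct values $p_\theta(1),\dots,p_\theta(i)$ have maximum at least $i$ and $p_\theta(i)$ is that maximum.

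\emph{Leaders.} If $i$ leads its block, then $p_\theta(i)$ (the last letter of the block) is at most $i$, so the block is $\{i\}$. The record condition then says $p_\theta$ maps $\{1,\dots,i-1\}$ into itself, so $\{1,\dots,i-1\}$ is a union of blocks. Since blocks appear in increasing order of their leaders, $\theta=\alpha\cdot i\cdot\beta$ with $\alpha$ a word on $\{1,\dots,i-1\}$, i.e., $i$ is a split fixed point. Conversely every split fixed point is a record. Moreover $\Qsq$ visibly sends a split fixed point $i$ of $\theta$ to the split fixed point $n+1-i$ of $\Qsq(\theta)$.

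\emph{Non-leaders.} If $i=\theta_k$ is not a leader, then $p_\theta(i)=\theta_{k-1}>i$, so $(\theta_{k-1},\theta_k)$ is a descent pair. In the record condition, every $m<i$ with $p_\theta(m)\le m$ is harmless, since $p_\theta(m)\le m<i<p_\theta(i)$. The $m$ with $p_\theta(m)>m$ are exactly the bottoms of descent pairs, because a leader never has $p_\theta(m)>m$. Hence $i$ is a record iff no descent pair $(a',b')$ of $\theta$ has $b'<i$ and $a'>p_\theta(i)$. This is a condition on the set $E(\theta)$ of descent pairs alone.

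\emph{Transport.} $\Qsq$ maps $E(\theta)$ onto $E(\Qsq(\theta))$ via $(a,b)\mapsto(n+1-b,n+1-a)$, and this map preserves the relation ``$b'<b$ and $a'>a$''. So your candidate sends records of $\theta$ to records of $\Qsq(\theta)$, and the same map for $\Qsq(\theta)$ inverts it.

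This is the content of Lemmas~\ref{lem.rtlmin_classification_geometric}, \ref{lem.transport_sfp_geometric} and~\ref{lem.transport_rtlmin_geometric}. Until you supply an argument of this kind, Step~3 remains unverified.
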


%GEOMETRIC APPROACH
\begin{proof}
    Let $\sigma = \F^{-1}(\theta)$ and $\sigma' = \F^{-1}(\Qsq(\theta))$.
    Then we must show that $\RtLmin(\sigma') = \RtLmin(\sigma)$.

    We define the injective maps $\psi_\sigma, \psi_{\sigma'} : [n] \to [n] \times [n]$ by
    \begin{align*}
        \psi_\sigma(i) &:= (i, \sigma(i)) \\
        \psi_{\sigma'}(i) &:= (i, \sigma'(i)).
    \end{align*}
    We use this to define a \emph{diagram} of a permutation $\sigma$ to be the set of points $\{\psi_\sigma(1), \dots, \psi_\sigma(n)\}$ and displayed as points in an $n \times n$ grid.
    For example, for $\sigma = 35142$ we have the diagram:
    \begin{figure}[H]
        \centering
        \permgrid{5}{3,5,1,4,2}
    \end{figure}
    We will refer to the points by their label along the $x$-axis.
    
    Let us then define the right-to-left minima positions of $\sigma$ and $\sigma'$ by
    \begin{align*}
        M_\sigma &:= \{i \in [n] : \sigma(i) < \sigma(j) \text{ for all } j > i\}, \\
        M_{\sigma'} &:= \{i \in [n] : \sigma'(i) < \sigma'(j) \text{ for all } j > i\}.
    \end{align*}
    These correspond to the points for which there are no other points to their south-east. Thus $2$ and $4$ are not right-to-left minima positions whereas $3$ is:
    \begin{figure}[H]
        \centering
        \permgrid[scale=0.6, highlights={2/5/SEw/red!15}]{5}{3,5,1,4,2}
        \permgrid[scale=0.6, highlights={4/4/SEw/red!15}]{5}{3,5,1,4,2}
        \permgrid[scale=0.6, highlights={3/1/SEw/green!15}]{5}{3,5,1,4,2}
    \end{figure}
    Their respective sizes are $|M_\sigma| = \RtLmin(\sigma)$ and $|M_{\sigma'}| = \RtLmin(\sigma')$. We also define the involution map $\rho: [n] \times [n] \to [n] \times [n]$ which encodes reflecting the diagram across the line $x+y=n+1$:
    \begin{align*}
        \rho(a,b) &:= (n+1-b, n+1-a).
    \end{align*}
    Notably, this is the same as applying $\D$, the rotated inverse map, to the permutation.
    For our example:
    \begin{figure}[H]
        \centering
        \begin{subfigure}[t]{0.3\textwidth}
            \centering
            \permgrid[scale=0.6]{5}{3,5,1,4,2}
            \caption*{$\sigma = 35142$}
        \end{subfigure}
        \begin{subfigure}[t]{0.3\textwidth}
            \centering
            \permgrid[scale=0.6]{5}{4,2,5,1,3}
            \caption*{$\D(\sigma) = 42513$}
        \end{subfigure}
    \end{figure}
    Our claim is then that $\rho$ restricts to a bijection
    \begin{align*}
        \rho_M: \psi_\sigma(M_{\sigma}) \to \psi_{\sigma'}(M_{\sigma'});
    \end{align*}
    this would imply that $\abs{M_{\sigma}} = \abs{M_{\sigma'}}$ and therefore $\RtLmin(\sigma) = \RtLmin(\sigma')$.

    We will show that $\rho_M$ is a bijection in two steps. First, we will classify the right-to-left minima of $\sigma$ in terms of $\theta$ as being one of two types. Second, we will show that each type of right-to-left minima of $\sigma$ are in bijection with those of the same type for $\sigma'$.
    
    % Since $\Qsq$ is an involution, replacing $\theta$ by $\Qsq(\theta)$ reverses the roles of $\sigma$ and $\sigma'$.
    % Hence it will be sufficient to show that the restriction $\rho_M$ is well-defined, as running the same argument with $\theta$ replaced by $\Qsq(\theta)$ would show that its inverse is also well-defined and therefore that $\rho_M$ is a bijection.
    % This will happen in two steps: first, we will classify the right-to-left minima of $\sigma$ in terms of $\theta$, and second, we will show that this description applies when $\theta$ is replaced by $\Qsq(\theta)$ too.

    \begin{definition}
        We define $i \in [n]$ to be a \emph{split fixed point} of a permutation $\theta \in S_n$ if $i$ is both a right-to-left minimum and a left-to-right maximum of $\theta$.
        Equivalently, a split fixed point of $\theta$ is an $i \in [n]$ such that the OLN of $\theta$ factors as a concatenation $\alpha \cdot i \cdot \beta$ where $\alpha$ is a word in the letters $\{1, \dots, i-1\}$ and $\beta$ is a word in the letters $\{i+1, \dots, n\}$.

        Let $S(\theta)$ be the set of \emph{split fixed point pairs}:
        \begin{align*}
            S(\theta) &:= \{(i,i) : i \text{ is a split fixed point of } \theta\}.
        \end{align*}
    \end{definition}

    \begin{lemma}\label{lem.sfp_equiv_defs}
        Let $i \in [n]$, and let $\theta \in S_n$. Any two of the following imply the third:
        \begin{itemize}
            \item[(a)] $i$ is a right-to-left minimum of $\theta$,
            \item[(b)] $i$ is a left-to-right maximum of $\theta$,
            \item[(c)] $i$ is a fixed point of $\theta$.
        \end{itemize}
    \end{lemma}
    \begin{proof}
        If we assume (a) and (b), then the numbers to the left of $i$ in the OLN are smaller than $i$ and the numbers to the right of $i$ are larger than $i$.
        This implies that if a number is smaller than $i$, it appears to the left of $i$, and if a number is larger than $i$, it appears to the right of $i$.
        There are exactly $i-1$ numbers smaller than $i$, so $i$ is the $i$th number in the OLN from the left and therefore $\theta(i) = i$, making it a fixed point of $\theta$.

        If we assume (a) and (c), then the numbers to the right of $i$ are larger than $i$.
        We know that $i$ is in the $i$th place from the left, so there are exactly $n-i$ places to the right of $i$ which is equal to the size of $\{i+1, \dots, n\}$.
        Therefore all numbers larger than $i$ are to the right of $i$ with no slots left over.
        Hence all the numbers smaller than $i$ must be to the left of $i$, giving us that $i$ is a left-to-right maximum of $\theta$.

        If we assume (b) and (c), then the numbers to the left of $i$ are smaller than $i$.
        We know that $i$ is in the $i$th place from the left, so there are exactly $i-1$ places to the left of $i$ which is equal to the size of $\{1, \dots, i-1\}$.
        Therefore all numbers smaller than $i$ are to the left of $i$ with no slots left over.
        Hence all the numbers larger than $i$ must be to the right of $i$, giving us that $i$ is a right-to-left minimum of $\theta$.
    \end{proof}
    
    In our diagrams, a right-to-left minimum is characterized by having no other points to its southeast, and a left-to-right maximum is characterized by having no other points to its northwest.
    Therefore, a split fixed point is characterized by having no other points to its northwest or southeast.
    \begin{figure}[H]
        \centering
        \begin{subfigure}[t]{0.3\textwidth}
            \centering
            \permgrid[scale=0.6, highlights={3/3/NWw/green!15, 3/3/SEw/green!15}]{5}{2,1,3,5,4}
            \caption*{Split fixed point at $3$}
        \end{subfigure}
        \begin{subfigure}[t]{0.3\textwidth}
            \centering
            \permgrid[scale=0.6, highlights={4/4/NWw/red!15, 4/4/SEw/red!15}]{5}{3,5,1,4,2}
            \caption*{No split fixed point at $4$}
        \end{subfigure}
    \end{figure}

    \begin{lemma}
        \label{lem.sfp_over_Finv_geometric}
        Let $i \in [n]$ and $\theta \in S_n$. Then, $i$ is a split fixed point of $\theta$ if and only if $i$ is a split fixed point of $\F^{-1}(\theta)$.
    \end{lemma}
    \begin{proof}
        If $i$ is a split fixed point of $\theta$, then by definition it is a left-to-right maximum of $\theta$ and a right-to-left minimum of $\theta$.
        If $i \neq n$, then since $i$ is a right-to-left minimum, $\theta(i+1) > i$ and is itself a left-to-right maximum of $\theta$:
        \begin{figure}[H]
            \centering
            \permgridpoints[scale=0.6, stagger columns, highlights={4/4/NWw/green!15, 5/6/NWw/green!15}]{7}{4/4/$i$/$i$, 5/6/$(i+1)$/$\theta(i+1)$}
            \caption*{Both $i$ and $\theta(i+1)$ are left-to-right maxima.}
        \end{figure}
        This means that $\F^{-1}(\theta)$ has $i$ in a cycle of its own and therefore that $i$ remains a fixed point.
        If $i = n$, then there are no further elements to be in a cycle with it, so $\F^{-1}(\theta)$ also keeps $i$ as a fixed point.
        The cycles to the left of $i$ consist of numbers less than $i$ while the cycles to the right of $i$ consist of numbers greater than $i$.
        Therefore we can decompose $\F^{-1}(\theta)$ into a permutation on $\{1, \dots, i-1\}$, a permutation on $\{i\}$, and a permutation on $\{i+1, \dots, n\}$.
        Writing this in OLN, we have $\F^{-1}(\theta) = \alpha \cdot i \cdot \beta$ where $\alpha$ and $\beta$ are words of $\{1, \dots, i-1\}$ and $\{i+1, \dots, n\}$ respectively.
        Thus $i$ is a split fixed point of $\F^{-1}(\theta)$.

        Conversely, if $i$ is a split fixed point of $\F^{-1}(\theta)$, then we can decompose $\F^{-1}(\theta)$ into a permutation on $\{1, \dots, i-1\}$, a permutation on $\{i\}$, and a permutation on $\{i+1, \dots, n\}$. In the CCD with cycles sorted by their maxima, that means that all cycles to the left of $i$ act on $\{1, \dots, i-1\}$ and all cycles to the right of $i$ act on $\{i+1, \dots, n\}$. Therefore, removing all parentheses by applying $\F$ leaves us with $\theta = \alpha \cdot i \cdot \beta$ where $\alpha$ and $\beta$ are words on $\{1, \dots, i-1\}$ and $\{i+1, \dots, n\}$ respectively, so we can conclude that $i$ is a split fixed point of $\theta$.
    \end{proof}

    \begin{corollary}
        Let $i \in [n]$ and $\theta \in S_n$. Then, $i$ is a split fixed point of $\theta$ if and only if $i$ is a split fixed point of $\F(\theta)$.
    \end{corollary}
    \begin{proof}
        Apply Lemma \ref{lem.sfp_over_Finv_geometric} to $\F(\theta)$ instead of $\theta$.
    \end{proof}

    \begin{definition}
        Let $\theta \in S_n$. We define the set of \emph{descent pairs} $E(\theta)$ to be
        \begin{align*}
            E(\theta) &:= \{(\theta(j), \theta(j+1)) : j \in [n-1], \theta(j) > \theta(j+1)\}.
        \end{align*}
    \end{definition}

    \begin{lemma}\label{lem.rtlmin-1_geometric}
        Let $\theta \in S_n$, and let $\sigma = \F^{-1}(\theta)$. Then:
        \begin{itemize}
            \item[(a)] If $(a,b) \in E(\theta)$, then $\sigma^{-1}(b) = a$.
            \item[(b)] $(\sigma^{-1}(b),b) \in E(\theta)$ if and only if $b$ is an excedance of $\sigma^{-1}$.
        \end{itemize}
    \end{lemma}
    \begin{proof}
        If $(a,b) \in E(\theta)$, then $a = \theta(j)$ and $b = \theta(j+1)$ for some $j \in [n-1]$ with $\theta(j) > \theta(j+1)$. Since $\theta(j+1) = b$ is smaller than $\theta(j)$, it cannot be a left-to-right maximum of $\theta$ and therefore is in the same cycle as $\theta(j)$ in $\sigma$. Thus $\sigma(\theta(j)) = \theta(j+1)$ which we can rewrite as $\sigma(a) = b$. Hence $\sigma^{-1}(b) = a$.
    
        For any $(\sigma^{-1}(b),b) \in E(\theta)$, by definition of $E(\theta)$, $\sigma^{-1}(b) > b$ and therefore $b$ is an excedance of $\sigma^{-1}$.

        Conversely, since every cycle begins with its maximum in CCD, the excedances of $\sigma^{-1}$ come in two forms: either they occur in the main body of a cycle or they occur going from the last element of the cycle to the first.
        In the first case, $\sigma^{-1}$ has a cycle that looks like $(\ldots, k, l, \ldots)$ where $k < l$ such that $k$ is an excedance.
        Since $\sigma^{-1}(k) = l$, we also have that $\sigma(l) = k$.
        Applying $\F$ gives an OLN:
        \begin{align*}
            \F(\sigma) = \theta &= \alpha \cdot l \cdot k \cdot \beta
        \end{align*}
        where $k < l$. Hence $(l,k) \in E(\theta)$.
        In the second case, $\sigma^{-1}$ has a cycle that looks like $(m, \ldots, r)$ where $r < m$ such that $r$ is an excedance.
        Since $\sigma^{-1}(r) = m$, we also have $\sigma(m) = r$.
        Also, the order of the cycle has reversed but $m$ is still the maximum, so it now looks like $(m, r, \ldots)$.
        Applying $\F$ gives an OLN:
        \begin{align*}
            \F(\sigma) = \theta &= \alpha \cdot m \cdot r \cdot \beta
        \end{align*}
        where $r < m$. Hence $(m,r) \in E(\theta)$.
    \end{proof}

    \begin{definition}
        Let $\theta \in S_n$. We define the set of \emph{right-to-left minimum descent pairs} $E^{\min}(\theta)$ to be
        \begin{align*}
            E^{\min}(\theta) := \{(a,b) \in E(\theta) : \nexists(a',b') \in E(\theta), b' < b < a < a'\} .
        \end{align*}
    \end{definition}

    \begin{lemma}\label{lem.rtlmin-2_geometric}
        Let $\theta \in S_n$, and let $\sigma = \F^{-1}(\theta)$. For any $(a,b) \in E(\theta)$, $b$ is a right-to-left minimum of $\sigma$ if and only if $(a,b) \in E^{\min}(\theta)$.
        %there does not exist any $(a',b') \in E(\theta)$ such that $b' < b < a < a'$.
    \end{lemma}
    \begin{proof}
        By Lemma \ref{lem.rtlmin-1_geometric} (a), we can rewrite our chain of inequalities as $b' < b < \sigma^{-1}(b) < \sigma^{-1}(b')$.
        Then since $\sigma^{-1}(b)$ is the position of $b$ in $\sigma$ and similarly $\sigma^{-1}(b')$ is the position of $b'$ in $\sigma$, the existence of any such $(a',b')$ pair would imply a number smaller than $b$ which is to the right of $b$ in $\sigma$ and therefore show that $b$ was not a right-to-left minimum.
        Hence, if $b$ is a right-to-left minimum of $\sigma$, then no such $(a',b') \in E(\theta)$ exists.

        Conversely, if no such $(a',b') \in E(\theta)$ exists, suppose that $b$ was not a right-to-left minimum of $\sigma$.
        This would imply the existence of some $c$ with position $\sigma^{-1}(c)$ such that $c < b < \sigma^{-1}(b) < \sigma^{-1}(c)$.
        However, this chain of inequalities includes $c < \sigma^{-1}(c)$, so $c$ is an excedance of $\sigma^{-1}$.
        By Lemma \ref{lem.rtlmin-1_geometric} (b), this puts $(\sigma^{-1}(c), c) \in E(\theta)$ which is a contradiction.
        Therefore we can conclude that if no such $(a',b') \in E(\theta)$ exists, then $b$ is a right-to-left minimum of $\sigma$.
    \end{proof}
    \begin{lemma}\label{lem.rtlmin_classification_geometric}
        Let $\theta \in S_n$, and let $\sigma = \F^{-1}(\theta)$. A given number $i \in [n]$ is a right-to-left minimum of $\sigma$ if and only if
        \begin{itemize}
            \item either $i$ is a split fixed point of $\theta$,
            \item or $(\sigma^{-1}(i), i) \in E^{\min}(\theta)$.
        \end{itemize}
    \end{lemma}
    \begin{proof}
        If $i$ is a right-to-left minimum of $\sigma$, then by Lemma \ref{lem.exc_cell_no_rtlmin_converse} we know that $\sigma^{-1}(i) \geq i$.
        
        If $\sigma^{-1}(i) = i$, then $i$ is a fixed point of $\sigma$.
        This gives us two of the conditions in Lemma \ref{lem.sfp_equiv_defs} which implies the third, that $i$ is a left-to-right maximum of $\sigma$ also holds.
        Therefore $i$ is a split fixed point of $\sigma$.
        By Lemma \ref{lem.sfp_over_Finv_geometric}, this tells us that $i$ is a split fixed point of $\theta$ also.

        If $\sigma^{-1}(i) > i$, then we have $i$ as an excedance of $\sigma^{-1}$.
        By Lemma \ref{lem.rtlmin-1_geometric}, this means that $(\sigma^{-1}(i), i) \in E(\theta)$.
        Then by Lemma \ref{lem.rtlmin-2_geometric}, $(\sigma^{-1}(i),i) \in E^{\min}(\theta)$.

        Conversely, if $i$ is a split fixed point of $\theta$, then $i$ is also a split fixed point of $\sigma$ by Lemma \ref{lem.sfp_over_Finv_geometric}.
        Then by the definition of a split fixed point, $i$ is a right-to-left minimum of $\sigma$.

        If $(\sigma^{-1}(i), i) \in E^{\min}(\theta)$, then by Lemma \ref{lem.rtlmin-2_geometric}, $i$ is a right-to-left minimum of $\sigma$.
    \end{proof}
    This gives us our classification of the right-to-left minima of $\sigma$ in terms of $\theta$.
    That is, $\psi_\sigma(M_\sigma) = E^{\min}(\theta) \sqcup S(\theta)$ since the $(i,\sigma(i))$ pairs with $i$ the position of a right-to-left minimum $\sigma(i)$ of $\sigma$ are classified by Lemma \ref{lem.rtlmin_classification_geometric} as either a descent pair or a split fixed point pair and vice versa. Similarly, $\psi_{\sigma'}(M_{\sigma'}) = E^{\min}(\Qsq(\theta)) \sqcup S(\Qsq(\theta))$.

    Let $\tau: [n] \times [n] \to [n] \times [n]$ be the map which pointwise implements $\Qsq$:
    \begin{align*}
        \tau(a,b) &:= (n+1-a, n+1-b).
    \end{align*}

    Applying $\rho$ to every point on the diagram of $\theta$ is the same as producing the diagram of $\D(\theta)$ or reflecting across the line $x+y=n+1$.
    Applying $\tau$ to every point on the diagram of $\theta$ is the same as producing the diagram of $\Qsq(\theta)$ or rotating by 180 degrees.
    \begin{lemma}\label{lem.transport_sfp_geometric}
        For any $\theta \in S_n$, all of the following are true:
        \begin{itemize}
            \item[(1)] $\tau(S(\theta)) = S(\Qsq(\theta))$,
            \item[(2)] $\rho|_{S(\theta)} : S(\theta) \to S(\Qsq(\theta))$ is a bijection.
        \end{itemize}
    \end{lemma}
    \begin{proof}
        Recall that the split fixed point pairs are the points on a permutation's diagram with no other points to the northwest or southeast.

        If we apply $\tau$ to both those regions for a split fixed point $i$, $NW_{i}$ and $SE_{i}$ respectively, then we get
        \begin{align*}
            \tau(NW_{i}) &= \{(n+1-(i-a),n+1-(i+b)) \in [n] \times [n] : a,b \in \mathbb{N}\} \\
            &= \{((n+1-i)+a,(n+1-i)-b) \in [n] \times [n] : a,b \in \mathbb{N}\} \\
            \tau(SE_{i}) &= \{(n+1-(i+a),n+1-(i-b)) \in [n] \times [n] : a,b \in \mathbb{N}\} \\
            &= \{((n+1-i)-a,(n+1-i)+b) \in [n] \times [n] : a,b \in \mathbb{N}\} .
        \end{align*}
        Thus $\tau(NW_{i})$ is the region southeast of $\tau(i,i) = (n+1-i,n+1-i)$ and $\tau(SE_{i})$ is the region northwest of $\tau(i,i) = (n+1-i,n+1-i)$.
        Applying $\tau$ produces the diagram of $\Qsq(\theta)$, so if there were no other points in those regions for $\theta$, then there will also be no other points in those regions for $\Qsq(\theta)$.
        Thus $(n+1-i,n+1-i)$ is a split fixed point of $\Qsq(\theta)$.
        That this is a bijection is immediate because $\tau$ is an involution.

        Since $\rho(i,i) = (n+1-i, n+1-i) = \tau(i,i)$, part (1) and part (2) are equivalent, so $\rho|_{S(\theta)} : S(\theta) \to S(\Qsq(\theta))$ is a bijection.
    \end{proof}

    For example, if $\theta = 451326798$, then we have split fixed points at $(6,6)$ and at $(7,7)$.
    \begin{figure}[H]
        \centering
        \begin{subfigure}[t]{0.45\textwidth}
            \centering
            \permgrid[scale=0.6, highlights={6/6/NWw/green!15, 6/6/SEw/blue!15}]{9}{4,5,1,3,2,6,7,9,8}
            \caption*{Split fixed point at $(6,6)$}
        \end{subfigure}
        \hfill
        \begin{subfigure}[t]{0.45\textwidth}
            \centering
            \permgrid[scale=0.6, highlights={7/7/NWw/green!15, 7/7/SEw/blue!15}]{9}{4,5,1,3,2,6,7,9,8}
            \caption*{Split fixed point at $(7,7)$}
        \end{subfigure}
    \end{figure}

    Applying $\tau$ gives the diagram of $\Qsq(\theta) = 213487956$ and transports the split fixed points and the regions as pictured:
    \begin{figure}[H]
        \centering
        \begin{subfigure}[t]{0.45\textwidth}
            \centering
            \permgrid[scale=0.6, highlights={4/4/NWw/blue!15, 4/4/SEw/green!15}]{9}{2,1,3,4,8,7,9,5,6}
            \caption*{SFP $(4,4)$ from $(6,6)$}
        \end{subfigure}
        \hfill
        \begin{subfigure}[t]{0.45\textwidth}
            \centering
            \permgrid[scale=0.6, highlights={3/3/NWw/blue!15, 3/3/SEw/green!15}]{9}{2,1,3,4,8,7,9,5,6}
            \caption*{SFP $(3,3)$ from $(7,7)$}
        \end{subfigure}
    \end{figure}

    \begin{lemma}\label{lem.transport_rtlmin_geometric}
        For any $\theta \in S_n$, all of the following are true:
        \begin{itemize}
            \item[(1)] $\rho|_{E(\theta)} : E(\theta) \to E(\Qsq(\theta))$ is a bijection,
            \item[(2)] $(x,y) \in E^{\min}(\theta)$ if and only if $\rho(x,y) = (q,r) \in E^{\min}(\Qsq(\theta))$,
            \item[(3)] $\rho|_{E^{\min}(\theta)} : E^{\min}(\theta) \to E^{\min}(\Qsq(\theta))$ is a bijection.
        \end{itemize}
    \end{lemma}
    \begin{proof}
        For $j \in [n-1]$, if $(\theta(j),\theta(j+1)) \in E(\theta)$, then $\rho(\theta(j),\theta(j+1)) = (n+1-\theta(j+1),n+1-\theta(j))$ and we claim that this point is in $E(\Qsq(\theta))$.
        We calculate that $\Qsq(\theta)(n+1-j) = n+1-\theta(j)$ and $\Qsq(\theta)(n+1-(j+1)) = n+1-\theta(j+1)$.
        Therefore
        \begin{align*}
            \rho(\theta(j),\theta(j+1)) &= (\Qsq(\theta)(n+1-(j+1)), \Qsq(\theta)(n+1-j)) \\
            &= (\Qsq(\theta)(n-j), \Qsq(\theta)(n-j+1)).
        \end{align*}
        Then $\theta(j) > \theta(j+1)$, so $n+1-\theta(j) < n+1-\theta(j+1)$.
        Hence $\Qsq(\theta)(n-j) > \Qsq(\theta)(n-j+1)$, so $\rho(\theta(j), \theta(j+1)) \in E(\Qsq(\theta))$.

        Replacing $\theta$ with $\Qsq(\theta)$ proves the reverse, so we can conclude part (1): that $\rho|_{E(\theta)}$ is a bijection between $E(\theta)$ and $E(\Qsq(\theta))$.

        For part (2), we consider the diagram of points in $E(\theta)$ (instead of the diagram of some permutation).
        The points $(x,y) \in E(\theta)$ for which there does not exist any $(x',y') \in E(\theta)$ such that $y' < y < x < x'$ are the points in the aforementioned diagram which have no other points to their southeast.

        By part (1), $\rho(E(\theta)) = E(\Qsq(\theta))$. 
        For a fixed $(x,y) \in E^{\min}(\theta)$, we define
        the southeast region $SE_{x,y}$:
        \begin{align*}
            SE_{x,y} &:= \{(x + a, y - b) \in [n] \times [n] : a,b \in \mathbb{N}\}.
        \end{align*}
        We know by hypothesis that $SE_{x,y} \cap E(\theta) = \{(x,y)\}$.
        Then if we apply $\rho$ to this set, we get
        \begin{align*}
            \rho(SE_{x,y}) &= \{(n+1-(y-b),n+1-(x+a)) \in [n]\times[n] : a,b \in \mathbb{N}\} \\
            &= \{((n+1-y)+b, (n+1-x)-a) \in [n] \times [n] : a,b\in\mathbb{N}\}.
        \end{align*}
        which is exactly the region southeast of $\rho(x,y) = (n+1-y,n+1-x) =: (q,r)$ in $\rho(E(\theta)) = E(\Qsq(\theta))$. Therefore $\rho(SE_{x,y}) \cap E(\Qsq(\theta)) = \{(q,r)\}$, so there does not exist any $(q',r') \in E(\Qsq(\theta))$ such that $r' < r < q < q'$.

        Again, replacing $\theta$ with $\Qsq(\theta)$ proves the reverse, so this proves part (2).

        Taking parts (1) and (2) together, further restricting $\rho$ to $E^{\min}(\theta)$ will then give us a bijection between $E^{\min}(\theta)$ and $E^{\min}(\Qsq(\theta))$ which is part (3).
    \end{proof}

%\permgridpoints[scale=0.6, labelscale=0.6, highlights={4/4/NWw/green!15, 5/6/NWw/green!15}]{7}{4/4/$i$/$i$, 5/6/$(i+1)$/$\theta(i+1)$}

    For example, if $\theta = 451326798$, then the set of descent pairs is $E(\theta) = \{(5,1), (3,2), (9,8)\}$. From those, $1 < 2 < 3 < 5$, so $E^{\min}(\theta) = \{(5,1),(9,8)\}$.
    \begin{figure}[H]
        \centering
        \begin{subfigure}[t]{0.32\textwidth}
            \centering
            \permgridpoints[scale=0.4, labelscale=1, highlights={5/1/SEw/green!15}]{9}{5/1/$5$/$1$, 3/2/$3$/$2$, 9/8/$9$/$8$}
            \caption*{$(5,1) \in E^{\min}(\theta)$}
        \end{subfigure}
        \hfill
        \begin{subfigure}[t]{0.32\textwidth}
            \centering
            \permgridpoints[scale=0.4, labelscale=1, highlights={9/8/SEw/green!15}]{9}{5/1/$5$/$1$, 3/2/$3$/$2$, 9/8/$9$/$8$}
            \caption*{$(9,8) \in E^{\min}(\theta)$}
        \end{subfigure}
        \hfill
        \begin{subfigure}[t]{0.32\textwidth}
            \centering
            \permgridpoints[scale=0.4, labelscale=1, highlights={3/2/SEw/red!15}]{9}{5/1/$5$/$1$, 3/2/$3$/$2$, 9/8/$9$/$8$}
            \caption*{$(3,2) \notin E^{\min}(\theta)$}
        \end{subfigure}
    \end{figure}
    Then applying $\rho$ gives us the following diagram, preserving the right-to-left minimum descent pairs.
    %FORMATTING? Maybe this is too many diagrams? Maybe I just need to get the captions looking nicer?
    \begin{figure}[H]
        \centering
        \begin{subfigure}[t]{0.2\textwidth}
            \centering
            \permgridpoints[scale=0.4, labelscale=1, highlights={9/5/SEw/green!15}]{9}{9/5/$9$/$5$, 8/7/$8$/$7$, 2/1/$2$/$1$}
            \caption*{$(9,5) \in E^{\min}(\Qsq(\theta))$}
        \end{subfigure}
        \qquad\qquad\qquad
        \begin{subfigure}[t]{0.2\textwidth}
            \centering
            \permgridpoints[scale=0.4, labelscale=1, highlights={2/1/SEw/green!15}]{9}{9/5/$9$/$5$, 8/7/$8$/$7$, 2/1/$2$/$1$}
            \caption*{$(2,1) \in E^{\min}(\Qsq(\theta))$}
        \end{subfigure}
        \qquad\qquad\qquad
        \begin{subfigure}[t]{0.2\textwidth}
            \centering
            \permgridpoints[scale=0.4, labelscale=1, highlights={8/7/SEw/red!15}]{9}{9/5/$9$/$5$, 8/7/$8$/$7$, 2/1/$2$/$1$}
            \caption*{$(8,7) \notin E^{\min}(\Qsq(\theta))$}
        \end{subfigure}
    \end{figure}
    
    %Put things together to get our final conclusion
    We now apply our accumulated results to prove the proposition. By Lemma \ref{lem.transport_sfp_geometric} part (2), we have a bijection $\rho|_{S(\theta)} : S(\theta) \to S(\Qsq(\theta))$, and by Lemma \ref{lem.transport_rtlmin_geometric} part (3), we have a bijection $\rho|_{E^{\min}(\theta)}: E^{\min}(\theta) \to E^{\min}(\Qsq(\theta))$.
    Then by Lemma \ref{lem.rtlmin_classification_geometric}, $\rho|_{S(\theta)}$ and $\rho|_{E^{\min}(\theta)}$ are bijections between pairs which correspond to right-to-left minima of $\sigma$ to pairs which correspond to right-to-left minima of $\sigma'$.
    Hence $\rho_M : \psi_\sigma(M_\sigma) \to \psi_{\sigma'}(M_{\sigma'})$ can be represented:
    \begin{align*}
        \rho_M(x,y) &= \begin{cases} \rho|_{S(\theta)}(x,y) & (x,y) \in S(\theta) \\
        \rho|_{E(\theta)}(x,y) & (x,y) \in E^{\min}(\theta)
        \end{cases} \\
        &= (n+1-y,n+1-x) .
    \end{align*}
    and is a bijection.
    This proves that $|\psi_\sigma(M_\sigma)| = |\psi_{\sigma'}(M_{\sigma'})|$, so $\RtLmin(\sigma) = \RtLmin(\sigma')$.
\end{proof}

\subsection{Proof of Theorem \ref{thm.mainD}}

\begin{proof}[\hypertarget{pf.des_ends}{\textbf{Proof of Theorem \ref{thm.mainD}}}]
    To prove that $D_1 - D_{n-1}$ is $0$-mesic under the action of rotation-reversal, we use Lemma \ref{lem.telescope_strategy} with $f(\pi) = D_1(\pi) - D_1(\I(\pi))$.

    Therefore we want to prove that for any $\pi \in S_n$, the following holds:
    \begin{align*}
        &D_1(\pi) - D_{n-1}(\pi) \\
        &= D_1(\pi) - D_1(\I(\pi)) - D_1(\R\circ\F^{-1}\circ\Qsq\circ\F(\pi)) + D_1(\I\circ\R\circ\F^{-1}\circ\Qsq\circ\F(\pi)).
    \end{align*}
    Or equivalently, after cancellation and some rearrangement,
    $$D_1(\I(\pi)) + D_1(\R\circ\F^{-1}\circ\Qsq\circ\F(\pi)) = D_{n-1}(\pi) + D_1(\I\circ\R\circ\F^{-1}\circ\Qsq\circ\F(\pi)).$$

    Writing $U = \F^{-1} \circ \Qsq \circ \F$, this rewrites to
    $$D_1(\I(\pi)) + D_1(\R\circ U(\pi)) = D_{n-1}(\pi) + D_1(\I\circ\R\circ U(\pi)).$$

    We will show that both sides of this equation are equal to $1$ by proving the equivalent identity
    $$D_1(\I \circ U(\theta)) = D_{n-1}(\theta)$$
    for all $\theta \in S_n$.

    The equivalence will be in Lemma \ref{lem.D_equiv} and then the identity will be in Lemma \ref{lem.D_identity}.
\end{proof}

\begin{lemma}
    \label{lem.reversal_facts}
    For every $\theta \in S_n$ with $n \geq 2$,
    \begin{itemize}
        \item[(a)] $D_1(\R(\theta)) = 1 - D_{n-1}(\theta),$ and
        \item[(b)] $D_1(\I(\theta)) + D_1(\I \circ \R(\theta)) = 1.$
    \end{itemize}
\end{lemma}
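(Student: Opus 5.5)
Both parts reduce to unwinding the definitions of $\R$ and $\I$ and comparing the relative order of two specific entries.

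For part (a), note that $\R(\theta)(1) = \theta(n)$ and $\R(\theta)(2) = \theta(n-1)$. Hence
\[
D_1(\R(\theta)) = [\theta(n) > \theta(n-1)].
\]
Since $\theta$ is injective and $n \geq 2$, exactly one of $\theta(n-1) > \theta(n)$ and $\theta(n-1) < \theta(n)$ holds. Therefore $[\theta(n) > \theta(n-1)] = 1 - [\theta(n-1) > \theta(n)] = 1 - D_{n-1}(\theta)$.

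For part (b), first describe $D_1$ of an inverse. For any $\pi \in S_n$, $D_1(\I(\pi)) = [\pi^{-1}(1) > \pi^{-1}(2)]$. In words, $D_1(\I(\pi))$ indicates that the value $2$ appears to the left of the value $1$ in the OLN of $\pi$.

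Next observe that reversal preserves the set of values and reverses their left-to-right order. So $2$ appears left of $1$ in $\R(\theta)$ if and only if $2$ appears right of $1$ in $\theta$. Concretely, $(\R(\theta))^{-1}(k) = n+1-\theta^{-1}(k)$, so
\[
D_1(\I \circ \R(\theta)) = [n+1-\theta^{-1}(1) > n+1-\theta^{-1}(2)] = [\theta^{-1}(2) > \theta^{-1}(1)].
\]
Because $\theta^{-1}(1) \neq \theta^{-1}(2)$, exactly one of $[\theta^{-1}(1) > \theta^{-1}(2)]$ and $[\theta^{-1}(2) > \theta^{-1}(1)]$ equals $1$. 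Their sum is therefore $1$, which is part (b).

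No step is a real obstacle. The only point needing care is writing $(\R(\theta))^{-1}$ correctly as $n+1-\theta^{-1}$, i.e.\ $\I \circ \R = \C \circ \I$. The hypothesis $n \geq 2$ is used exactly to ensure that $D_1$, $D_{n-1}$ and the values $1, 2$ all make sense.
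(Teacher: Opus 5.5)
Your proposal is correct and follows the paper's proof essentially verbatim: part (a) by reading off $\R(\theta)(1)=\theta(n)$, $\R(\theta)(2)=\theta(n-1)$ and using that exactly one strict inequality holds, and part (b) via the identity $(\R(\theta))^{-1}(k) = n+1-\theta^{-1}(k)$ followed by the same trichotomy. The only cosmetic difference is that the paper phrases (b) in terms of $\tau = \I\circ\R(\theta)$, while you phrase it in terms of $\theta^{-1}$.
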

\begin{proof}
    For (a), we can use the definition of $D_i$ to calculate that
    \begin{align*}
        D_1(\R(\theta)) &= [\R(\theta)(1) > \R(\theta)(2)] \\
        &= [\theta(n) > \theta(n-1)] \\
        &= 1 - [\theta(n) < \theta(n-1)] = 1 - D_{n-1}(\theta).
    \end{align*}

    For (b) and each $i \in [n]$, we start by noting that reversing $\theta$ moves $i$ from position $\I(\theta)(i)$ to position $n + 1 - \I(\theta)(i)$, so we have the corresponding identity $\I(\theta)(i) = n + 1 - \I \circ \R(\theta)(i)$.
    Let us write $\tau = \I \circ \R(\theta)$.
    We can again use the definition of $D_i$ and the above identity to get:
    \begin{align*}
        &D_1(\I(\theta)) + D_1(\I \circ \R(\theta)) \\
        &= [\I(\theta)(1) > \I(\theta)(2)] + [\tau(1) > \tau(2)] \\
        &= [n+1-\tau(1) > n+1-\tau(2)] + [\tau(1) > \tau(2)] \\
        &= [\tau(1) < \tau(2)] + [\tau(1) > \tau(2)].
    \end{align*}
    Then since $\tau(1) \neq \tau(2)$, exactly one of these two will always be true, so their sum is $1$.
\end{proof}

\begin{lemma}
    \label{lem.D_equiv}
    Let $n \geq 2$. Recall that $U = \F^{-1} \circ \Qsq \circ \F$. The following are equivalent:
    \begin{itemize}
        \item[(a)] For every $\theta \in S_n$, $D_1(\I(\theta)) + D_1(\R\circ U(\theta)) = 1$;
        \item[(b)] For every $\theta \in S_n$, $D_{n-1}(\theta) + D_1(\I\circ\R\circ U(\theta)) = 1$;
        \item[(c)] For every $\theta \in S_n$, $D_1(\I \circ U(\theta)) = D_{n-1}(\theta)$.
    \end{itemize}
\end{lemma}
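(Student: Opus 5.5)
We will prove the equivalences using only Lemma~\ref{lem.reversal_facts} and the fact that $U$ is a bijection of $S_n$ (as a composition of bijections). The plan is to show (a)$\Leftrightarrow$(c) and (b)$\Leftrightarrow$(c). Each step rewrites one summand of the given identity with a complementary indicator, and then, where needed, reparametrizes over $\theta$.

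\textbf{(b)$\Leftrightarrow$(c).} Apply Lemma~\ref{lem.reversal_facts}(b) to the permutation $U(\theta)$. This gives
\[
D_1(\I\circ U(\theta)) + D_1(\I\circ\R\circ U(\theta)) = 1,
\]
so $D_1(\I\circ\R\circ U(\theta)) = 1 - D_1(\I\circ U(\theta))$. Substituting this into the identity in (b), that identity becomes $D_{n-1}(\theta) = D_1(\I\circ U(\theta))$. This holds for each fixed $\theta$, so (b) and (c) are equivalent pointwise, with no reparametrization needed.

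\textbf{(a)$\Leftrightarrow$(c).} Apply Lemma~\ref{lem.reversal_facts}(a) to $U(\theta)$. This gives
\[
D_1(\R\circ U(\theta)) = 1 - D_{n-1}(U(\theta)),
\]
so (a) is equivalent to the statement that $D_1(\I(\theta)) = D_{n-1}(U(\theta))$ for all $\theta$. This is an identity of the same shape as (c), but with the roles shifted. To match it with (c), we need to relate $U$ and its inverse. Since $\Qsq$ is an involution, $U = \F^{-1}\circ\Qsq\circ\F$ is an involution as well, so $U^{-1} = U$.

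Now substitute $\theta = U(\eta)$. As $\theta$ ranges over $S_n$, so does $\eta$, because $U$ is a bijection. The reformulated (a) then reads
\[
D_1(\I\circ U(\eta)) = D_{n-1}(\eta) \quad\text{for all } \eta \in S_n,
\]
which is exactly (c).

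\textbf{Main obstacle.} The only delicate point is this last step. The equivalence (a)$\Leftrightarrow$(c) holds only as an equivalence of universally quantified statements, not pointwise. It uses the involutivity of $U$ together with the fact that the substitution $\theta = U(\eta)$ runs over all of $S_n$.
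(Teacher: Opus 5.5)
Your proof is correct and follows essentially the paper's route: both equivalences rest on Lemma~\ref{lem.reversal_facts} together with the involutivity of $U$, which justifies the reparametrization $\theta = U(\eta)$. The only difference is that for (b)$\Leftrightarrow$(c) you apply Lemma~\ref{lem.reversal_facts}(b) directly to $U(\theta)$. This makes that equivalence pointwise and avoids the paper's substitute-and-substitute-back step.
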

\begin{proof}
    First for showing that (a) and (c) are equivalent, we note that $U \circ U$ is the identity. Then if we substitute $\theta$ with $U(\pi)$, we can rewrite (a) with
    \begin{align*}
        D_1(\I\circ U(\pi)) + D_1(\R(\pi)) = 1
    \end{align*}
    since $\R\circ U \circ U(\pi) = \R(\pi)$. Then we apply Lemma \ref{lem.reversal_facts} (a) to obtain
    \begin{align*}
        D_1(\I\circ U(\pi)) + 1 - D_{n-1}(\pi) &= 1 \\
        D_1(\I\circ U(\pi)) &= D_{n-1}(\pi)
    \end{align*}
    which is (c). All of these steps are reversible, so we have equivalence.

    Next, we show that (b) and (c) are equivalent. We'll substitute $\theta$ with $U(\pi)$ to turn (b) into
    \begin{align*}
        D_{n-1}(U(\pi)) + D_1(\I\circ\R(\pi)) = 1 .
    \end{align*}
    Then we apply Lemma \ref{lem.reversal_facts} (b) to obtain
    \begin{align*}
        D_{n-1}(U(\pi)) + (1 - D_1(\I(\pi))) &= 1 \\
        D_{n-1}(U(\pi)) &= D_1(\I(\pi)) .
    \end{align*}
    Finally, since $\theta = U(\pi)$, we also have $U(\theta) = U\circ U(\pi) = \pi$. Using this substitution we get
    \begin{align*}
        D_{n-1}(\theta) &= D_1(\I\circ U (\theta))
    \end{align*}
    which is (c). Again, all of these steps are reversible, so we have equivalence.

    Then (a) and (b) are both equivalent to (c), so they are equivalent to each other also.
\end{proof}

\begin{lemma}
    \label{lem.D_identity}
    Let $n \geq 2$. For every $\theta \in S_n$,
    $$D_1(\I \circ U(\theta)) = D_{n-1}(\theta).$$
\end{lemma}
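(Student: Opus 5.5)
The plan is to compute both sides of Lemma~\ref{lem.D_identity} directly in terms of the one-line word $\omega := \F(\theta)$, using the ``predecessor'' description of inverses from Lemma~\ref{lem.rtlmin-1_geometric}. Set $\varphi := \Qsq(\omega)$ and $\sigma' := U(\theta) = \F^{-1}(\varphi)$. Then $\theta = \F^{-1}(\omega)$ and $\varphi(j) = n+1-\omega(n+1-j)$. The left side is
\[
D_1(\I(\sigma')) = [\sigma'^{-1}(1) > \sigma'^{-1}(2)].
\]
By the construction of $\F^{-1}$, for any value $k$, $\sigma'^{-1}(k)$ is the letter immediately before $k$ in $\varphi$ when $k$ is not a left-to-right maximum of $\varphi$. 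When $k$ is a left-to-right maximum, $\sigma'^{-1}(k)$ is instead the last letter of the block of $\varphi$ that starts at $k$. Symmetrically, $\theta(m)$ is the letter right after $m$ in $\omega$ if that letter is not a left-to-right maximum, and otherwise it is the first letter of $m$'s block.

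\textbf{Step 1: $\sigma'^{-1}(1) = n+1-\theta(n)$ always.} Let $p$ be the position of $n$ in $\omega$. Since $n$ is a left-to-right maximum, its block in $\omega$ is the whole suffix starting at position $p$. If $p<n$, then $\theta(n) = \omega(p+1)$. Meanwhile $1$ sits at position $n+1-p \ge 2$ of $\varphi$, so it is not a left-to-right maximum, and its predecessor in $\varphi$ is $n+1-\omega(p+1)$. If $p=n$, then $\theta(n)=n$ and $\varphi(1)=1$, so $1$ is a fixed point of $\sigma'$. In both cases $\sigma'^{-1}(1) = n+1-\theta(n)$.

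\textbf{Step 2: treat $2$ and $n-1$.} In the generic case, $2$ is not a left-to-right maximum of $\varphi$, and $n$ does not occur right after $n-1$ in $\omega$ in a way that changes blocks. The same reversal argument then gives $\sigma'^{-1}(2) = n+1-\theta(n-1)$. This yields
\[
[\sigma'^{-1}(1) > \sigma'^{-1}(2)] = [\theta(n-1) > \theta(n)] = D_{n-1}(\theta).
\]

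The exceptional configurations are the ones I expect to be the main obstacle: $2$ is a left-to-right maximum of $\varphi$, or the successor of $n-1$ in $\omega$ is $n$. Since $2$ can be a left-to-right maximum of $\varphi$ only if it sits at position $1$, or at position $2$ right after $1$, these come down to a short list of shapes of the end of $\omega$. The list is $\omega = \cdots(n{-}1)$, $\omega = \cdots(n{-}1)\,n$, $\omega = \cdots n\,(n{-}1)$, and the case where $n-1$ lies inside the prefix before $n$ and its block ends right before $n$.

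For each shape I will read off $\theta(n-1)$, $\theta(n)$, $\sigma'^{-1}(1)$ and $\sigma'^{-1}(2)$ explicitly. I will check that both indicators agree, even though the formula $\sigma'^{-1}(2) = n+1-\theta(n-1)$ may fail there. I expect that in each such case one side is forced to be $0$ or $1$ by a fixed point ($1$ or $2$ fixed in $\sigma'$, or $n$ or $n-1$ fixed in $\theta$), and the other side matches. I will sanity-check the case list against small $n$ (for example $n=3$, all six $\theta$) before writing it up.
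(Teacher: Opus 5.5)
Your approach is the paper's mechanism. Both arguments locate $1$ and $2$ in $\Qsq(\F(\theta))$, read off the letters immediately preceding them, and decide whether $1$ or $2$ is a left-to-right maximum there. Your Step~1 is a genuine streamlining: $\sigma'^{-1}(1)=n+1-\theta(n)$ does hold for every $\theta$, which absorbs most of the paper's separate case $\theta(n)=n$. Your generic case is then exactly ``$n-1$ is not the last entry of its cycle in $\theta$'', and there your argument is complete.

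One wording issue: if $n-1$ precedes $n$ in $\omega$, then $n-1$ is automatically a left-to-right maximum whose block ends right before $n$. So your fourth shape, as stated, contains generic configurations such as $\omega=213$ with $n=3$. The exceptional shape you actually need is that $n$ \emph{immediately} follows $n-1$ and is not last, i.e.\ $\theta(n-1)=n-1$ and $\theta(n)\neq n$.

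The real gap is that the exceptional cases are only announced, and they make up most of the paper's proof. Moreover, the shortcut you forecast for them fails in one case. When $\omega$ ends in $n\,(n-1)$, i.e.\ the last cycle of $\theta$ is $(n,n-1)$, no fixed point appears on either side:
\begin{itemize}
\item $\theta(n-1)=n$ and $\theta(n)=n-1$ give $D_{n-1}(\theta)=1$;
\item $\varphi$ begins $2\,1$, followed (if $n\geq 3$) by a letter $\geq 3$, so $\sigma'$ contains the $2$-cycle $(2,1)$ and $\sigma'^{-1}(1)=2>1=\sigma'^{-1}(2)$.
\end{itemize}
The other exceptional cases close by direct reading:
\begin{itemize}
\item If $\omega$ ends in $n-1$ with at least one letter between $n$ and $n-1$, then $\theta(n-1)=n$. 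Also $\varphi(1)=2$ is followed by a letter $\geq 3$, so $2$ is fixed by $\sigma'$, and $\sigma'^{-1}(1)=n+1-\theta(n)\geq 3$.
\item If $\omega=\cdots(n-1)\,n$, then $\theta(n)=n$ and $\sigma'(1)=1$, so both sides are $0$.
\item If $n$ immediately follows $n-1$ and is itself followed by $x$, then $\theta(n-1)=n-1>x=\theta(n)$. Also $\varphi=\alpha\,1\,2\cdots$ with $\alpha$ nonempty and all its letters $\geq 3$, so $\sigma'(1)=2$ and $\sigma'^{-1}(1)>1=\sigma'^{-1}(2)$.
\end{itemize}
These are the subcases of the paper's case (iii), together with the part of its case (ii) not already covered by your generic case. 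Once you write them out, your proof is complete.
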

\begin{proof}
    We will break things into three cases:
    \begin{itemize}
        \item[(i)] Both $\theta(n) \neq n$ and $n-1$ is not the last element in its cycle in $\theta$.
        \item[(ii)] $\theta(n) = n$.
        \item[(iii)] Both $\theta(n) \neq n$ and $n-1$ is the last element in its cycle in $\theta$.
    \end{itemize}

    First, if $\I \circ U(\theta)(1) = n+1-\theta(n)$ and $\I \circ U (\theta)(2) = n+1-\theta(n-1)$, then $D_1(\I \circ U(\theta)) = [\I \circ U(\theta)(1) > \I \circ U(\theta)(2)] = [\theta(n-1) > \theta(n)] = D_{n-1}(\theta)$.
    We claim that $\I \circ U(\theta)(1) = n+1-\theta(n)$ and $\I \circ U (\theta)(2) = n+1-\theta(n-1)$ hold in case (i).
    
    So in case (i) with both of those two conditions, we note that the CCD of $\theta$ has some element to the right of each of $n$ and $n-1$ in their respective cycles (possibly the same cycle).
    This means that $\theta(n-1) \notin \{n-1,n\}$ and $\theta(n) \neq n$.
    Applying $\Qsq \circ \F$ to that $\theta$ gives us the elements $n+1-\theta(n)$ and $n+1-\theta(n-1)$ immediately to the left of $1$ and $2$ respectively.
    Then we have $n+1-\theta(n-1) > 2$, and $n+1-\theta(n) > 1$, so neither $1$ nor $2$ can be a left-to-right maximum of $\Qsq \circ \F(\theta)$.
    Therefore applying $\F^{-1}$ gives $U(\theta)(n+1-\theta(n)) = 1$ and $U(\theta)(n+1-\theta(n-1)) = 2$.
    Thus $\I \circ U(\theta)(1) = n+1-\theta(n)$ and $\I \circ U (\theta)(2) = n+1-\theta(n-1)$.

    In case (ii) with $\theta(n) = n$, then $D_{n-1}(\theta) = 0$ since $\theta(n-1) \neq n$ and is therefore less than $n$.
    We then calculate that
    \begin{align*}
        \theta(n) &= n \\
        &\iff \F(\theta)(n) = n \\
        &\iff \Qsq\circ\F(\theta)(1) = 1 \\
        &\iff U(\theta)(1) = 1 \\
        &\iff \I\circ U(\theta)(1) = 1.
    \end{align*}
    Then $\I \circ U(\theta)(2) \neq 1$ and therefore must be larger than $1$, so $D_1(\I \circ U(\theta)) = 0 = D_{n-1}(\theta)$.

    In case (iii) where $n-1$ is the last element in its cycle in $\theta$ and $\theta(n) \neq n$, then either $\theta(n-1) = n-1$, or $\theta(n-1) = n$ since $\theta(n-1)$ is the first and largest element of the cycle containing $n-1$.
    If $\theta(n-1) = n$, then $\theta(n-1) > \theta(n)$ immediately.
    If $\theta(n-1) = n-1$, then $\theta(n-1) > \theta(n)$ by injectivity (since $\theta(n) \neq n$).
    In both of these cases, $D_{n-1}(\theta) = 1$.
    
    In the case of $\theta(n-1) = n-1$, the CCD of $\theta$ is of the form $\dots(n-1)(n,\dots)$, so $\Qsq \circ \F(\theta) = \alpha \cdot 1 \cdot 2 \cdot \beta$ for some nonempty word $\alpha$ and a possibly empty word $\beta$.
    Since $\alpha$ is nonempty, it must contain some element greater than $2$, so $2$ is not a left-to-right maximum.
    Then $U(\theta) = \dots(\dots,1,2,\dots)\dots$, so $U(\theta)(1) = 2$.
    Hence $\I\circ U(\theta)(2) = 1$ and $\I\circ U(\theta)(1) > 1$, so $D_1(\I \circ U(\theta))=1=D_{n-1}(\theta)$.

    In the case of $\theta(n-1) = n$, the CCD of $\theta$ is of the form $\dots(n, n-1)$ or of the form $\dots(n, \dots, n-1)$ with some elements in between, so $\Qsq \circ \F(\theta) = 2 \cdot \alpha \cdot 1 \cdot \beta$ where $\alpha$ is empty or nonempty respectively.
    If $\alpha$ is empty, then whatever element begins $\beta$ is greater than $2$, so it is a left-to-right maximum of $\Qsq\circ\F(\theta)$. Therefore the first cycle of $U(\theta)$ is $(2,1)$. If $\beta$ is itself empty, then we have $U(\theta) = (2,1)$, so the first cycle is still $(2,1)$.
    Therefore $U(\theta)(2) = 1$ and $U(\theta)(1) = 2$, so $\I \circ U(\theta)(1) = 2 > 1 = \I\circ U(\theta)(2)$ and $D_1(\I \circ U(\theta)) = 1 = D_{n-1}(\theta)$.
    If $\alpha$ is nonempty, then the first element of $\alpha$ is greater than $2$, so it is a left-to-right maximum of $\Qsq\circ\F(\theta)$.
    Therefore $U(\theta)(2) = 2$ and $U(\theta)(1)\neq 1$ (since $1$ is not a left-to-right maximum), so $\I \circ U(\theta)(2) = 2$ and $\I \circ U(\theta)(1) \neq 1$.
    Hence $\I \circ U(\theta)(1) > \I \circ U(\theta)(2)$ by injectivity, so $D_1(\I \circ U(\theta)) = 1 = D_{n-1}(\theta)$.

\end{proof}

\subsection{Proof of Theorem \ref{thm.mainFixEnds}}

\begin{proof}[\hypertarget{pf.fix_ends}{\textbf{Proof of Theorem \ref{thm.mainFixEnds}}}]
    To prove that $\Fix_1 - \Fix_n$ is $0$-mesic under the actions of rotated inverse-complement, complement-rotated inverse, rotated inverse-reversal, and reversal-rotated inverse, we use Lemma \ref{lem.telescope_strategy} separately in two different cases.

    For rotated inverse-reversal and reversal-rotated inverse, we have $f(\pi) = -[\pi(1) = n] - \Fix_n(\pi)$ using Iverson brackets.

    For rotated inverse-complement and complement-rotated inverse, we have $f(\pi) = \Fix_1(\pi) + [\pi(1) = n]$.

    For rotated inverse-complement and complement-rotated inverse (written as $\tau$), we rewrite the claim that we want to prove:
    \begin{align*}
        \Fix_1(\pi) - \Fix_n(\pi) &= f(\pi) - f(\tau(\pi)) \\
        \Fix_1(\pi) - \Fix_n(\pi) &= \Fix_1(\pi) + [\pi(1) = n] - (\Fix_1(\tau(\pi)) + [\tau(\pi)(1) = n]) \\
        -\Fix_n(\pi) &= [\pi(1) = n] - \Fix_1(\tau(\pi)) - [\tau(\pi)(1) = n] \\
        \Fix_n(\pi) + [\pi(1) = n] &= \Fix_1(\tau(\pi)) + [\tau(\pi)(1) = n] .
    \end{align*}
    We prove the claim for rotated inverse-complement in Lemma \ref{lem.DC} and we prove the claim for complement-rotated inverse in Lemma \ref{lem.CD}.

    For rotated inverse-reversal and reversal-rotated inverse (written as $\tau$), we rewrite the claim that we want to prove:
    \begin{align*}
        \Fix_1(\pi) - \Fix_n(\pi) &= f(\pi) - f(\tau(\pi)) \\
        \Fix_1(\pi) - \Fix_n(\pi) &= -[\pi(1) = n] - \Fix_n(\pi) - (-[\tau(\pi)(1) = n] - \Fix_n(\tau(\pi))) \\
        \Fix_1(\pi) &= -[\pi(1) = n] + [\tau(\pi)(1) = n] + \Fix_n(\tau(\pi)) \\
        \Fix_1(\pi) + [\pi(1) = n] &= [\tau(\pi)(1) = n] + \Fix_n(\tau(\pi)).
    \end{align*}
    We prove the claim for rotated inverse-reversal in Lemma \ref{lem.DR} and we prove the claim for reversal-rotated inverse in Lemma \ref{lem.RD}.
\end{proof}
\begin{lemma}
    \label{lem.DC}
    For $\tau = \C \circ \F^{-1} \circ \D \circ \F$ and every $\pi \in S_n$,
    \begin{align*}
        \Fix_n(\pi) &= [\tau(\pi)(1) = n] \\
        &\text{and} \\
        [\pi(1) = n] &= \Fix_1(\tau(\pi)).
    \end{align*}
\end{lemma}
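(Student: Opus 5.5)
The plan is to compute everything directly in terms of $\sigma := \F^{-1}(\D(\F(\pi)))$. Since $\tau(\pi) = \C(\sigma)$, we have $\tau(\pi)(1) = n+1-\sigma(1)$. Hence
\[
[\tau(\pi)(1) = n] = [\sigma(1) = 1] \qquad\text{and}\qquad \Fix_1(\tau(\pi)) = [\sigma(1) = n].
\]
Write $\theta = \F(\pi)$ and $\varphi = \D(\theta)$, so that $\sigma = \F^{-1}(\varphi)$. From $\D = \Qsq \circ \I$ we get $\varphi(k) = n+1-\theta^{-1}(n+1-k)$ for all $k$. Each of the two claimed identities then reduces to a chain of equivalences that passes through $\F^{-1}$, $\D$, and $\F$ in turn. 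The case $n=1$ is trivial, since every bracket equals $1$, so we assume $n \geq 2$.

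For the first identity, we read off from the CCD description of $\F^{-1}$ that $\sigma(1) = 1$ holds iff $1$ forms its own cycle, i.e.\ iff $1$ is a left-to-right maximum of $\varphi$ followed by another left-to-right maximum or by the end of the word. Since $1$ can only be a left-to-right maximum when it is the first letter, and the next letter is then automatically larger (as in the proof of Lemma~\ref{lem.sfp_over_Finv_geometric}), this is equivalent to $\varphi(1) = 1$. By the formula for $\varphi$, this holds iff $\theta^{-1}(n) = n$, i.e.\ $\theta(n) = n$. Finally, $\F(\pi)$ ends in $n$ iff the last cycle of the CCD of $\pi$ (the one headed by $n$) is the singleton $(n)$, i.e.\ iff $\pi(n) = n$. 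This gives $[\sigma(1) = 1] = \Fix_n(\pi)$, as required.

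For the second identity, $\sigma(1) = n$ means that $n$ follows $1$ cyclically in the CCD of $\sigma$. Since $n$ always heads its cycle, and that cycle is the last one, this happens iff $1$ is the last element of the last cycle, i.e.\ iff $\varphi(n) = 1$. The formula for $\varphi$ turns this into $\theta^{-1}(1) = n$, i.e.\ $\theta(n) = 1$. In turn, $\F(\pi)$ ends in $1$ iff the cycle of $\pi$ headed by $n$ ends in $1$, i.e.\ iff $\pi(1) = n$. This gives $[\sigma(1) = n] = [\pi(1) = n]$.

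The only delicate point is the CCD bookkeeping: justifying that "$1$ is a fixed point of $\F^{-1}(\varphi)$" is equivalent to "$\varphi(1) = 1$", and that "$\sigma(1) = n$" is equivalent to "$\varphi$ ends in $1$". In the second of these, the alternative placement of $n$ directly after $1$ inside a cycle must be ruled out using the fact that cycles start with their maxima, and we need $n \geq 2$ so that $1 \neq n$. The rest is routine index arithmetic with $\D$.
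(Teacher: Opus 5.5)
Your proof is correct and is essentially the paper's argument: the same chain of equivalences through $\F$, $\D = \Qsq \circ \I$, $\F^{-1}$ and $\C$. You run it backwards from $\tau(\pi)$ using the explicit formula $\varphi(k) = n+1-\theta^{-1}(n+1-k)$, and you also justify both directions of the two $\F^{-1}$ steps, which the paper asserts with little or only one-directional justification.
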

\begin{proof}
    To show that $\Fix_n(\pi) = [\tau(\pi)(1) = n]$, we calculate the consequences of each map being applied:
    \begin{align*}
        \Fix_n(\pi) = 1 & \iff \pi(n) = n\\
        &\iff \F(\pi)(n) = n \\
        &\iff \I \circ \F(\pi)(n) = n \\
        &\iff \Qsq \circ \I \circ \F(\pi)(1) = 1 \\
        &\iff \F^{-1}\circ\D\circ\F(\pi)(1) = 1 \\
        &\iff \C\circ\F^{-1} \circ \D \circ \F(\pi)(1) = n \\
        &\iff [\tau(\pi)(1) = n] = 1.
    \end{align*}
    Similarly, we want to show that $[\pi(1) = n] = \Fix_1(\tau(\pi))$. To start things off, we note that if $\pi(1) = n$, then in the CCD, $n$ always begins the last cycle. Then for $\pi(1) = n$, we can conclude that the last cycle ends with $1$. Thus $\F(\pi)(n) = 1$. Hence we calculate
    \begin{align*}
        [\pi(1) = n] = 1 &\iff \pi(1) = n \\
        &\iff \F(\pi)(n) = 1 \\
        &\iff \I \circ \F(\pi)(1) = n \\
        &\iff \Qsq \circ \I \circ \F(\pi)(n) = 1.
    \end{align*}
    Now if we apply $\F^{-1}$, since $1$ is the last entry in the OLN of $\D \circ \F(\pi)$, it must be in the last cycle. That last cycle must always begin with $n$, so $\F^{-1}\circ\D\circ\F(\pi)(1) = n$. Thus
    \begin{align*}
        [\pi(1) = n] = 1 &\iff \D \circ \F(\pi)(n) = 1 \\
        &\iff \F^{-1}\circ\D\circ\F(\pi)(1) = n \\
        &\iff \C \circ \F^{-1}\circ\D\circ\F(\pi)(1) = 1 \\
        &\iff \tau(\pi)(1) = 1 \\
        &\iff \Fix_1(\tau(\pi)) = 1.
    \end{align*}
\end{proof}
\begin{lemma}
    \label{lem.CD}
    For $\tau = \D \circ \F^{-1} \circ \C \circ \F$ and every $\pi \in S_n$,
    \begin{align*}
        \Fix_n(\pi) &= [\tau(\pi)(1) = n] \\
        &\text{and} \\
        [\pi(1) = n] &= \Fix_1(\tau(\pi)).
    \end{align*}
\end{lemma}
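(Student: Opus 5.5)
The plan is to unwind $\D$ at the outer end so that both claims become statements about the intermediate permutation $\sigma := \F^{-1}\circ\C\circ\F(\pi)$. These can then be traced back through $\C$ and $\F$ using the same two facts that Lemma \ref{lem.DC} relies on. First, the last cycle of a CCD always begins with $n$. Second, the last letter of an OLN always lies in the last cycle of its $\F^{-1}$-image.

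\textbf{Unwinding $\D$.} Since $\D = \Qsq\circ\I$, we have $\D(\sigma)(i) = n+1-\sigma^{-1}(n+1-i)$.

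For the first claim, $\tau(\pi)(1) = n$ is equivalent to $n+1-\sigma^{-1}(n) = n$, that is, to $\sigma^{-1}(n)=1$, i.e.\ $\sigma(1) = n$.

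For the second claim, $\tau(\pi)(1)=1$ is equivalent to $\sigma^{-1}(n) = n$, i.e.\ $\sigma(n)=n$.

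So it suffices to prove two things: $\pi(n)=n \iff \sigma(1)=n$, and $\pi(1)=n \iff \sigma(n)=n$.

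\textbf{Reducing $\pi$ to a condition on $\F(\pi)$.}

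\emph{First chain.} Suppose $\pi(n)=n$. Then $(n)$ is the last cycle of the CCD, so $\F(\pi)(n)=n$. Conversely, if $\F(\pi)$ ends in $n$, then the last cycle, which must begin with $n$, is just $(n)$. Hence $\pi(n)=n \iff \F(\pi)(n)=n \iff \C\circ\F(\pi)(n)=1$.

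\emph{Second chain.} This is the same observation used in Lemma \ref{lem.DC}: $\pi(1)=n$ exactly when $1$ closes the last cycle, that is, when $\F(\pi)(n)=1$. This is in turn equivalent to $\C\circ\F(\pi)(n)=n$.

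\textbf{Reducing a condition on $\theta := \C\circ\F(\pi)$ to a condition on $\sigma=\F^{-1}(\theta)$.}

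\emph{First chain.} We need $\theta(n)=1 \iff \sigma(1)=n$. If $1$ is the last letter of $\theta$, then it sits at the end of the last cycle of $\sigma$, which starts with $n$; hence $\sigma(1)=n$. Conversely, if $\sigma(1)=n$, then $1$ is the last element of the cycle headed by $n$. That cycle is the last cycle, so $1$ is the last letter of $\theta$.

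\emph{Second chain.} We need $\theta(n)=n \iff \sigma(n)=n$. This is exactly the argument from the first reduction step, applied to $\sigma$ and $\theta=\F(\sigma)$ in place of $\pi$ and $\F(\pi)$.

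I expect no real obstacle here. The only delicate points are the following.
\begin{itemize}
\item Computing $\D$ correctly. It is easy to confuse $\sigma$ with $\sigma^{-1}$, so I would double-check the formula $\D(\sigma)(i)=n+1-\sigma^{-1}(n+1-i)$ against the example $\D(35142)=42513$.
\item The degenerate case $n=1$. There all four conditions hold trivially.
\end{itemize}
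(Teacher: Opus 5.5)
Your proof is correct and is essentially the paper's argument. The paper traces the same chain of equivalences through $\F$, $\C$, $\F^{-1}$, $\I$, $\Qsq$ one map at a time, relying on the same two facts (the last cycle of a CCD starts with $n$, and the last OLN letter lies in that cycle), whereas you collapse $\I$ and $\Qsq$ into the single formula $\D(\sigma)(i)=n+1-\sigma^{-1}(n+1-i)$, work from the outside in, and also spell out the $\F$ and $\F^{-1}$ justifications that the paper leaves implicit here.
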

\begin{proof}
    To show that $\Fix_n(\pi) = [\tau(\pi)(1) = n]$, we calculate that
    \begin{align*}
        \Fix_n(\pi) = 1 &\iff \pi(n) = n \\
        &\iff \F(\pi)(n) = n \\
        &\iff \C \circ \F(\pi)(n) = 1 \\
        &\iff \F^{-1} \circ \C \circ \F(\pi)(1) = n \\
        &\iff \I \circ \F^{-1} \circ \C \circ \F(\pi)(n) = 1 \\
        &\iff \Qsq \circ \I \circ \F^{-1} \circ \C \circ \F(\pi)(1) = n \\
        &\iff \tau(\pi)(1) = n \\
        &\iff [\tau(\pi)(1) = n] = 1.
    \end{align*}
    Then to show that $[\pi(1) = n] = \Fix_1(\tau(\pi))$, we have
    \begin{align*}
        [\pi(1) = n] = 1 &\iff \pi(1) = n \\
        &\iff \F(\pi)(n) = 1 \\
        &\iff \C \circ \F(\pi)(n) = n \\
        &\iff \F^{-1} \circ \C \circ \F(\pi)(n) = n \\
        &\iff \I \circ \F^{-1} \circ \C \circ \F(\pi)(n) = n \\
        &\iff \Qsq \circ \I \circ \F^{-1} \circ \C \circ \F(\pi)(1) = 1 \\
        &\iff \tau(\pi)(1) = 1 \\
        &\iff \Fix_1(\tau(\pi)) = 1.
    \end{align*}
\end{proof}
\begin{lemma}
    \label{lem.DR}
    For $\tau = \R \circ \F^{-1} \circ \D \circ \F$ and every $\pi \in S_n$,
    \begin{align*}
        \Fix_1(\pi) &= [\tau(\pi)(1) = n] \\
        &\text{and} \\
        [\pi(1) = n] &= \Fix_n(\tau(\pi)).
    \end{align*}
\end{lemma}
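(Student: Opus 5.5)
We trace each side through the constituent maps of $\tau = \R \circ \F^{-1} \circ \D \circ \F$, exactly as in the proofs of Lemmas \ref{lem.DC} and \ref{lem.CD}. Two facts are used repeatedly. First, $\D(\theta)(n+1-\theta(i)) = n+1-i$. Second, when $\F^{-1}$ is applied to an OLN $\theta$, the last cycle of the CCD starts with $n$ and ends with $\theta(n)$, so $\F^{-1}(\theta)(\theta(n)) = n$. Also, $\pi(n) = n$ holds if and only if $\F(\pi)(n) = n$ (a fixed point $n$ is the final singleton cycle), and $\pi(1) = n$ holds if and only if $\F(\pi)(n) = 1$ (the last cycle begins with $n$, so it ends with $1$ exactly when $\pi(1)=n$).

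\textbf{First identity.} We chain the following equivalences:
\begin{align*}
\pi(1)=\ldots \text{ (start) }:\quad \Fix_1(\pi) = 1 &\iff \pi(1) = 1 \iff \F(\pi)(1) = 1,
\end{align*}
where the second step holds because the first cycle of the CCD starts with its largest element, which is $1$ only if that cycle is $(1)$. Next,
\begin{align*}
\F(\pi)(1) = 1 &\iff \D\circ\F(\pi)(n) = n \\
&\iff \F^{-1}\circ\D\circ\F(\pi)(n) = n \\
&\iff \R\circ\F^{-1}\circ\D\circ\F(\pi)(1) = n,
\end{align*}
using the first fact with $i = 1$ and $\theta(i) = 1$ for the first step. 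This gives $\Fix_1(\pi) = [\tau(\pi)(1) = n]$.

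\textbf{Second identity.} Start from $\pi(1) = n \iff \F(\pi)(n) = 1$. Applying the first fact with $i = n$ gives $\D\circ\F(\pi)(n) = 1$. By the second fact, $\F^{-1}\circ\D\circ\F(\pi)(1) = n$, and reversing gives $\tau(\pi)(n) = n$, i.e.\ $\Fix_n(\tau(\pi)) = 1$.

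Every step must be a genuine equivalence, and checking the $\F^{-1}$ steps is the main obstacle. For the first identity, if $\F^{-1}(\theta)(n) = n$ then $(n)$ is the last cycle and is a singleton, so $\theta(n) = n$; the converse is similar. For the second identity, $\F^{-1}(\theta)(1) = n$ means $1$ ends the last cycle, so $\theta(n) = 1$. The remaining steps, which involve $\D$ and $\R$, are bijective relabellings and so reverse immediately.
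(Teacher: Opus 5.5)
Your proof is correct and takes the same approach as the paper: a chain of equivalences that tracks $\Fix_1(\pi)$ and $[\pi(1)=n]$ through $\F$, $\D$, $\F^{-1}$ and $\R$, and you also justify the $\F^{-1}$ steps in both directions. The only cosmetic difference is that you handle $\D$ in one step via $\D(\theta)(n+1-\theta(i)) = n+1-i$, whereas the paper applies $\I$ and then $\Qsq$ as separate steps.
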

\begin{proof}
    To show that $\Fix_1(\pi) = [\tau(\pi)(1) = n]$, we calculate that
    \begin{align*}
        \Fix_1(\pi) = 1 &\iff \pi(1) = 1 \\
        &\iff \F(\pi)(1) = 1 \\
        &\iff \I \circ \F(\pi)(1) = 1 \\
        &\iff \Qsq \circ \I \circ \F(\pi)(n) = n \\
        &\iff \F^{-1} \circ \D \circ \F(\pi)(n) = n \\
        &\iff \R \circ \F^{-1} \circ \D \circ \F(\pi)(1) = n \\
        &\iff \tau(\pi)(1) = n \\
        &\iff [\tau(\pi)(1) = n] = 1.
    \end{align*}
    To show that $[\pi(1) = n] = \Fix_n(\tau(\pi))$, we calculate that
    \begin{align*}
        [\pi(1) = n] = 1 &\iff \pi(1) = n \\
        &\iff \F(\pi)(n) = 1 \\
        &\iff \I \circ \F(\pi)(1) = n \\
        &\iff \Qsq \circ \I \circ \F(\pi)(n) = 1 \\
        &\iff \F^{-1} \circ \D \circ \F(\pi)(1) = n \\
        &\iff \R \circ \F^{-1} \circ \D \circ \F(\pi)(n) = n \\
        &\iff \tau(\pi)(n) = n \\
        &\iff \Fix_n(\tau(\pi)) = 1.
    \end{align*}
\end{proof}
\begin{lemma}
    \label{lem.RD}
    For $\tau = \D \circ \F^{-1} \circ \R \circ \F$ and every $\pi \in S_n$,
    \begin{align*}
        \Fix_1(\pi) &= [\tau(\pi)(1) = n] \\
        &\text{and} \\
        [\pi(1) = n] &= \Fix_n(\tau(\pi)).
    \end{align*}
\end{lemma}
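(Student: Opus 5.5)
We will argue exactly as in Lemmas \ref{lem.CD} and \ref{lem.DR}, by chasing a single entry through the constituent maps of $\tau = \D \circ \F^{-1} \circ \R \circ \F$. Throughout we use the following facts:
\begin{itemize}
\item $\D(\sigma)(k) = n+1-\sigma^{-1}(n+1-k)$, so that $\D(\sigma)(1) = n$ if and only if $\sigma(1) = n$, and $\D(\sigma)(n) = n$ if and only if $\sigma(1) = 1$;
\item $\R(\theta)(k) = \theta(n+1-k)$;
\item the Foata facts already used, namely that $\pi(n)=n \iff \F(\pi)(n)=n$, that $\pi(1)=1\iff \F(\pi)(1)=1$, and that $\pi(1)=n \iff \F(\pi)(n)=1$.
\end{itemize}

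For the first identity we chain equivalences:
\begin{align*}
\Fix_1(\pi)=1 &\iff \pi(1)=1\\
&\iff \F(\pi)(1)=1\\
&\iff \R\circ\F(\pi)(n)=1.
\end{align*}
We then need the equivalence $\theta(n)=1 \iff \F^{-1}(\theta)(1)=n$. This holds because the last entry of the OLN lies in the last cycle of the CCD, and that cycle begins with $n$; this is the same argument used in Lemma \ref{lem.DC}. The converse also holds: if $\F^{-1}(\theta)(1)=n$, then $1$ is the last element of the cycle headed by $n$, which is the final cycle. So
\begin{align*}
\R\circ\F(\pi)(n)=1 &\iff \F^{-1}\circ\R\circ\F(\pi)(1)=n\\
&\iff \D\circ\F^{-1}\circ\R\circ\F(\pi)(1)=n,
\end{align*}
the last step by the first fact about $\D$. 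This is $[\tau(\pi)(1)=n]=1$.

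For the second identity we chain
\begin{align*}
\pi(1)=n &\iff \F(\pi)(n)=1\\
&\iff \R\circ\F(\pi)(1)=1.
\end{align*}
Next we use $\theta(1)=1 \iff \F^{-1}(\theta)(1)=1$. Indeed, if $1$ comes first, then it is a left-to-right maximum and so forms a singleton cycle; conversely, the cycle $(1)$ is first in the CCD. This gives
\begin{align*}
\R\circ\F(\pi)(1)=1 &\iff \F^{-1}\circ\R\circ\F(\pi)(1)=1\\
&\iff \D\circ\F^{-1}\circ\R\circ\F(\pi)(n)=n,
\end{align*}
the last step by the second fact about $\D$. This is exactly $\Fix_n(\tau(\pi))=1$.

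The main point needing care is the step through $\F^{-1}$ in the first identity, where the position of $1$ at the end of the OLN must be converted into a value of $\F^{-1}(\theta)$. All remaining steps are direct index bookkeeping for $\R$ and $\D$.
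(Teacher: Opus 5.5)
Your proof is correct and follows the same entry-chasing chain of equivalences through $\F$, $\R$, $\F^{-1}$ and $\D$ as the paper. The only difference is cosmetic: you merge the paper's separate $\I$ and $\Qsq$ steps into the single formula $\D(\sigma)(k) = n+1-\sigma^{-1}(n+1-k)$, and the facts you use to pass through $\F^{-1}$ are exactly the paper's and are correctly justified.
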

\begin{proof}
    To show that $\Fix_1(\pi) = [\tau(\pi)(1) = n]$, we calculate that
    \begin{align*}
        \Fix_1(\pi) = 1 &\iff \pi(1) = 1 \\
        &\iff \F(\pi)(1) = 1 \\
        &\iff \R \circ \F(\pi)(n) = 1 \\
        &\iff \F^{-1} \circ \R \circ \F(\pi)(1) = n \\
        &\iff \I\circ\F^{-1}\circ\R\circ\F(\pi)(n) = 1 \\
        &\iff \Qsq\circ\I\circ\F^{-1}\circ\R\circ\F(\pi)(1) = n \\
        &\iff \D\circ\F^{-1}\circ\R\circ\F(\pi)(1) = n \\
        &\iff \tau(\pi)(1) = n \\
        &\iff [\tau(\pi)(1) = n] = 1.
    \end{align*}
    To show that $[\pi(1) = n] = \Fix_n(\tau(\pi))$, we calculate that
    \begin{align*}
        [\pi(1) = n] = 1 &\iff \pi(1) = n \\
        &\iff \F(\pi)(n) = 1 \\
        &\iff \R \circ \F(\pi)(1) = 1 \\
        &\iff \F^{-1}\circ\R\circ\F(\pi)(1) = 1 \\
        &\iff \I\circ\F^{-1}\circ\R\circ\F(\pi)(1) = 1 \\
        &\iff \Qsq\circ\I\circ\F^{-1}\circ\R\circ\F(\pi)(n) = n \\
        &\iff \tau(\pi)(n) = n \\
        &\iff \Fix_n(\tau(\pi)) = 1.
    \end{align*}
\end{proof}

\section{\label{sect.proved_tables}Summarizing Results}

We here summarize the known telescoping functions associated with certain Foatic map and statistic pairs. Some results are due to the work done in this paper while others are easy corollaries of work done previously.

\begin{corollary}
    \label{cor.wexc_sr}
    In proving the $\frac{n+1}{2}$-mesicity of $\wexc$ under the action of reversal-rotated inverse and complement-rotated inverse, where $T$ denotes either action, Sheridan-Rossi \cite[Theorem 2.2.12]{Sheridan-Rossi2020} shows that
    $$\wexc(T(\pi)) = n + 1 - \wexc(\pi)$$
    which we can rewrite as
    \begin{align*}
        \wexc(T(\pi)) + \wexc(\pi) &= n + 1 \\
        \frac{\wexc(T(\pi)) + \wexc(\pi)}{2} &= \frac{n+1}{2} \\
        \frac{\wexc(\pi)}{2} &= -\frac{\wexc(T(\pi))}{2} + \frac{n+1}{2} \\
        \wexc(\pi) &= \frac{\wexc(\pi)}{2}-\frac{\wexc(T(\pi))}{2} + \frac{n+1}{2}. 
    \end{align*}
    This satisfies Lemma \ref{lem.telescope_strategy} with $f(\pi) = \frac{1}{2}\wexc(\pi)$.
\end{corollary}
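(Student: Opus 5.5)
The corollary is really just a reformulation, so I will verify it directly against the hypothesis of Lemma~\ref{lem.telescope_strategy}. I take as given the identity $\wexc(T(\pi)) = n+1-\wexc(\pi)$ from \cite[Theorem 2.2.12]{Sheridan-Rossi2020}, for $T$ either reversal-rotated inverse ($\D \circ \F^{-1} \circ \R \circ \F$) or complement-rotated inverse ($\D \circ \F^{-1} \circ \C \circ \F$). I then set $X = S_n$, $s = \wexc$, $\tau = T$, $c = \frac{n+1}{2}$ and $f = \frac12 \wexc$, and check equation~\eqref{eq.telescope} pointwise.

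The check itself is one line. For every $\pi \in S_n$,
\begin{align*}
f(\pi) - f(T(\pi)) + \frac{n+1}{2}
&= \frac{\wexc(\pi)}{2} - \frac{n+1-\wexc(\pi)}{2} + \frac{n+1}{2} \\
&= \wexc(\pi).
\end{align*}
Lemma~\ref{lem.telescope_strategy} then gives that $\wexc$ is $\frac{n+1}{2}$-mesic under each $T$.

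Since the same $f$ and the same constant work for both actions, these two actions are $\wexc$-compatible via $f = \frac12\wexc$. Corollary~\ref{cor.compat} therefore also gives average $\frac{n+1}{2}$ on any closed walk that mixes the two actions. This is a natural remark to record alongside the corollary.

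The only real content is the cited involution-type identity, so there is no substantial obstacle. If I wanted a self-contained argument, the hard step would be reproving Sheridan-Rossi's identity. I would do this by tracking weak excedances through the composition with the earlier lemmas:
\begin{itemize}
\item \emph{Step 1.} $\D$ converts $\wexc$ into $n$ minus antiexcedances under the rotated, inverted indexing, by an analogue of Lemma~\ref{lem.exc_inverse_rotation}.
\item \emph{Step 2.} Lemma~\ref{lem.anxc_foata} turns antiexcedances into descents of $\F(\cdot)$.
\item \emph{Step 3.} Lemma~\ref{lem.des_complement_reversal} swaps descents and ascents under $\R$ or $\C$.
\item \emph{Step 4.} Lemma~\ref{lem.asc_foatainv} returns this to $\Fix+\exc-1 = \wexc - 1$.
\end{itemize}
The anticipated identity is then $\wexc(T(\pi)) = n - (\wexc(\pi)-1)$, which is exactly the cited identity.
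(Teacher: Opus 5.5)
Your proposal is correct and follows the paper's argument: you substitute Sheridan-Rossi's identity $\wexc(T(\pi)) = n+1-\wexc(\pi)$ into Equation~\eqref{eq.telescope} with $f = \frac12\wexc$ and $c = \frac{n+1}{2}$, and, as you note, the shared $f$ makes the two actions $\wexc$-compatible, matching rows 11--12 of Table~\ref{tab.proved_telescopes}. Your optional self-contained derivation of the cited identity, which the paper does not supply, also checks out: $\D$ clearly preserves $\Fix$ and preserves $\exc$ (apply Lemma~\ref{lem.exc_inverse_rotation} twice), so $\wexc(\D(\sigma)) = n - \anxc(\sigma)$, and Lemmas~\ref{lem.anxc_foata}, \ref{lem.des_complement_reversal} and \ref{lem.asc_foatainv} then give $n-(\wexc(\pi)-1)$ whether the middle map is $\R$ or $\C$.
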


\begin{table}[H]
    \centering
    \renewcommand{\arraystretch}{1.25}
    \hspace{-1pc}
    \begin{tabular}{@{}c l l l c l@{}}
        \toprule
        \# & Foatic map & Statistic & Associated $f(\pi)$ & Mesicity & Due to \\
        \midrule
        1 & $\C-\I$ & $\Fix(\pi)$ & $-\exc(\pi)$ & $1$ & \ref{thm.mainFix} \\
        2 & $\C-\Qsq$ & $\Fix(\pi)$ & $-\exc(\pi)$& $1$ & \ref{thm.mainFix} \\
        3 & $\R-\Qsq$ & $\Fix(\pi)$ & $-\exc(\pi)$& $1$ & \ref{thm.mainFix} \\
        4 & $\R-\I$ & $\Fix(\pi)$ & $-\exc(\pi)$& $1$ & \ref{thm.mainFix} \\
        5 & $\Qsq-\C$ & $\RtLmax(\pi) - \RtLmin(\pi)$ & $\RtLmax(\pi)$ & $0$ & \ref{thm.mainRtL} \\
        6 & $\Qsq-\R$ & $D_1(\pi)-D_{n-1}(\pi)$ & $D_1(\pi) - D_1(\I(\pi))$ & $0$ & \ref{thm.mainD} \\
        7 & $\R-\D$ & $\Fix_1(\pi) - \Fix_n(\pi)$ & $-[\pi(1) = n] - \Fix_n(\pi)$ & $0$ & \ref{thm.mainFixEnds} \\
        8 & $\D-\R$ & $\Fix_1(\pi) - \Fix_n(\pi)$ & $-[\pi(1) = n] - \Fix_n(\pi)$ & $0$ & \ref{thm.mainFixEnds} \\
        9 & $\C-\D$ & $\Fix_1(\pi) - \Fix_n(\pi)$ & $\Fix_1(\pi) + [\pi(1) = n]$ & $0$ & \ref{thm.mainFixEnds} \\
        10 & $\D-\C$ & $\Fix_1(\pi) - \Fix_n(\pi)$ & $\Fix_1(\pi) + [\pi(1) = n]$ & $0$ & \ref{thm.mainFixEnds} \\
        11 & $\C-\D$ & $\wexc(\pi)$ & $\frac{1}{2}\wexc(\pi)$ & $\frac{n+1}{2}$ & \ref{cor.wexc_sr}, \cite{Sheridan-Rossi2020} \\
        12 & $\R-\D$ & $\wexc(\pi)$ & $\frac{1}{2}\wexc(\pi)$ & $\frac{n+1}{2}$ & \ref{cor.wexc_sr}, \cite{Sheridan-Rossi2020} \\
        \bottomrule
    \end{tabular}
    \caption{Recall that $\C$ is complement, $\I$ is inversion, $\Qsq$ is rotation, $\R$ is reversal, and $\D$ is rotated-inverse.}
    \label{tab.proved_telescopes}
\end{table}

\section{\label{sect.open}Further directions}

Since the homomesies for other Foatic maps were proven previously by \cite{Sheridan-Rossi2020, LaCroixRoby2020FoaticAO} using other methods, it remains open as to what their associated telescoping functions are and whether these actions are compatible with any others. That these functions exist, and are unique up to an additive constant per-orbit, is known due to Lemma \ref{lem.telescope_strategy} and the subsequent Remark \ref{rmk.telescope_fn}, but calculating that a function fits up to some $n$ for a given statistic $s$ and action $\tau$ which is known to be $c$-mesic does not imply that it fits in general.

Along with these conjectured $f(\pi)$, we offer one correction to Theorem 2.2.11 of \cite{Sheridan-Rossi2020} where it is stated without proof that $\wexc + \exc$ is $n$-mesic for the map $\I - \I$. This is true for $n \leq 8$, but fails for $n = 9$ and we provide one counterexample orbit:

\begin{table}[H]
    \centering
    \renewcommand{\arraystretch}{1.25}
    \begin{tabular}{@{}c l l c@{}}
        \toprule
        $k$ & $\tau^k(\pi)$ (OLN) & CCD of $\tau^k(\pi)$ & $\wexc + \exc$ \\
        \midrule
        $0$ & $258673491$ & $(9,1,2,5,7,4,6,3,8)$ & $12$ \\
        $1$ & $923678451$ & $(2)(3)(8,5,7,4,6)(9,1)$ & $10$ \\
        $2$ & $915672438$ & $(9,8,3,5,7,4,6,2,1)$ & $8$ \\
        \midrule
            &             & Average & $10$ \\
        \bottomrule
    \end{tabular}
    \caption{An orbit of size $3$ of inversion-inversion $\tau = \I \circ \F^{-1} \circ \I \circ \F$ on $S_9$, with $\tau^3(\pi) = \pi$. The average of $\wexc + \exc$ over this orbit is $10 \neq 9 = n$, so $\wexc + \exc$ is not $n$-mesic under inversion-inversion.}
    \label{tab.II_counterexample}
\end{table}

Another seemingly promising but eventually false homomesy is for $\LtRmax - \RtLmin$ being $0$-mesic under the action of $\I-\I$. Again, the first counterexample only appears at $n = 9$:

\begin{table}[H]
    \centering
    \renewcommand{\arraystretch}{1.25}
    \begin{tabular}{@{}c l l c c c@{}}
        \toprule
        $k$ & $\tau^k(\pi)$ (OLN) & CCD of $\tau^k(\pi)$ & $\LtRmax$ & $\RtLmin$ & $\LtRmax - \RtLmin$ \\
        \midrule
        $0$ & $246738591$ & $(9,1,2,4,7,5,3,6,8)$ & $6$ & $1$ & $5$ \\
        $1$ & $923784651$ & $(2)(3)(7,6,4)(8,5)(9,1)$ & $1$ & $1$ & $0$ \\
        $2$ & $914723568$ & $(9,8,6,3,4,7,5,2,1)$ & $1$ & $6$ & $-5$ \\
        $3$ & $267843591$ & $(9,1,2,6,3,7,5,4,8)$ & $5$ & $1$ & $4$ \\
        $4$ & $923754861$ & $(2)(3)(5)(8,6,4,7)(9,1)$ & $1$ & $1$ & $0$ \\
        $5$ & $916732548$ & $(9,8,4,7,5,3,6,2,1)$ & $1$ & $4$ & $-3$ \\
        \midrule
            &             &         &     &   Average & $\frac{1}{6}$ \\
        \bottomrule
    \end{tabular}
    \caption{An orbit of size $6$ of inversion-inversion $\tau = \I \circ \F^{-1} \circ \I \circ \F$ on $S_9$, with $\tau^6(\pi) = \pi$. The average of $\LtRmax - \RtLmin$ over this orbit is $\frac{1}{6} \neq 0$, so $\LtRmax - \RtLmin$ is not $0$-mesic under inversion-inversion, although it is $0$-mesic for all $n \leq 8$.}
    \label{tab.II_counterexample_LtR}
\end{table}

We do however conjecture the following list of telescoping functions corresponding to each given statistic and action pair.

\begin{table}[H]
    \centering
    \renewcommand{\arraystretch}{1.25}
    \begin{tabular}{@{}c l l l c@{}}
        \toprule
        \# & Foatic map & Statistic & Conjectured $f(\pi)$ & Mesicity\\
        \midrule
        1 & $\C-\C$ & $\Fix_1(\pi) - \Fix_n(\pi)$ & $\Fix_1(\pi)$ & $0$ \\
        2 & $\R-\R$ & $\Fix_1(\pi) - \Fix_n(\pi)$ & $-\Fix_n(\pi)$ & $0$\\
        3 & $\Qsq-\C$ & $\Fix_1(\pi) - \Fix_n(\pi)$ & $-\Fix_n(\pi) - [\pi(n) = 1]$ & $0$\\
        4 & $\Qsq-\R$ & $\Fix_1(\pi) - \Fix_n(\pi)$ & $\Fix_1(\pi) + [\pi(n) = 1]$ & $0$\\
        5 & $\I-\Qsq$ & $\Fix_1(\pi) - \Fix_n(\pi)$ & $\Fix_1(\pi)$ & $0$\\
        6 & $\I-\D$ & $\Fix_1(\pi) - \Fix_n(\pi)$ & $\Fix_1(\pi)$ & $0$\\
        7 & $\D-\I$ & $\Fix_1(\pi) - \Fix_n(\pi)$ & $\Fix_1(\pi)$ & $0$\\
        8 & $\C-\Qsq$ & $\Fix_1(\pi) - \Fix_n(\pi)$ & $-\Fix_n(\pi) - [\cyc(\pi) = 1]$ & $0$\\
        9 & $\R-\Qsq$ & $\Fix_1(\pi) - \Fix_n(\pi)$ & $\Fix_1(\pi) + [\cyc(\pi) = 1]$ & $0$\\
        10 & $\Qsq-\I$ & $\wexc(\pi) + \exc(\pi)$ & $\exc(\pi)$ & $n$ \\
        11 & $\Qsq-\Qsq$ & $\wexc(\pi) + \exc(\pi)$ & $\exc(\pi)$ & $n$ \\
        12 & $\Qsq-\I$ & $\LtRmax(\pi) - \RtLmin(\pi)$ & $\LtRmax(\pi)$ & $0$ \\
        13 & $\Qsq-\Qsq$ & $\LtRmax(\pi) - \RtLmin(\pi)$ & $\LtRmax(\pi)$ & $0$ \\
        \bottomrule
    \end{tabular}
    \caption{The telescoping identity $s(\pi) = f(\pi) - f(\tau(\pi)) + c$ for each conjecture has been verified exactly at $n = 4,5,6,7$. For rows 5--7, the function $-\Fix_n(\pi)$ also seems to serve as a telescoping function (equivalently, the two functions differ by a per-orbit constant in the sense of Remark~\ref{rmk.telescope_fn}).}
    \label{tab.conjectured_telescopes}
\end{table}

%\nocite{*}
\bibliographystyle{amsalpha}
\bibliography{Refs}

@misc{LaCroixRoby2020FoaticAO,
  author        = {{La Croix}, Michael and Roby, Tom},
  title         = {{F}oatic actions of the symmetric group and fixed-point homomesy},
  howpublished  = {Preprint, arXiv:2008.03292 [math.CO]},
  year          = {2020},
  eprint        = {2008.03292},
  archivePrefix = {arXiv},
  primaryClass  = {math.CO},
  url           = {https://arxiv.org/abs/2008.03292},
}

@phdthesis{Sheridan-Rossi2020,
  author  = {{Sheridan-Rossi}, Elizabeth},
  title   = {Homomesy for {F}oatic Actions on the Symmetric Group},
  school  = {University of Connecticut},
  address = {Storrs, CT},
  year    = {2020},
  url     = {https://digitalcommons.lib.uconn.edu/dissertations/2620},
}

@article{Propp_Roby_2015,
  author  = {Propp, James and Roby, Tom},
  title   = {Homomesy in products of two chains},
  journal = {Electronic Journal of Combinatorics},
  volume  = {22},
  number  = {3},
  pages   = {Paper 3.4},
  year    = {2015},
  doi     = {10.37236/3579},
  url     = {https://doi.org/10.37236/3579},
}

@incollection{Roby2016,
  author    = {Roby, Tom},
  title     = {Dynamical algebraic combinatorics and the homomesy phenomenon},
  booktitle = {Recent Trends in Combinatorics},
  editor    = {Beveridge, Andrew and Griggs, Jerrold R. and Hogben, Leslie and Musiker, Gregg and Tetali, Prasad},
  series    = {The IMA Volumes in Mathematics and its Applications},
  volume    = {159},
  publisher = {Springer},
  address   = {Cham},
  year      = {2016},
  pages     = {619--652},
  doi       = {10.1007/978-3-319-24298-9_25},
  url       = {https://doi.org/10.1007/978-3-319-24298-9_25},
}

@article{ElderEtAl2024,
  author        = {Elder, Jennifer and Lafreni{\`e}re, Nadia and McNicholas, Erin and Striker, Jessica and Welch, Amanda},
  title         = {Homomesies on permutations: {A}n analysis of maps and statistics in the {FindStat} database},
  journal       = {Mathematics of Computation},
  volume        = {93},
  number        = {346},
  pages         = {921--976},
  year          = {2024},
  doi           = {10.1090/mcom/3866},
  url           = {https://doi.org/10.1090/mcom/3866},
  eprint        = {2206.13409},
  archivePrefix = {arXiv},
}

@book{FoataSchutzenberger1970,
  author    = {Foata, Dominique and Sch{\"u}tzenberger, Marcel-Paul},
  title     = {Th{\'e}orie g{\'e}om{\'e}trique des polyn{\^o}mes {E}ul{\'e}riens},
  series    = {Lecture Notes in Mathematics},
  volume    = {138},
  publisher = {Springer-Verlag},
  address   = {Berlin},
  year      = {1970},
  doi       = {10.1007/BFb0060799},
  url       = {https://doi.org/10.1007/BFb0060799},
}

@book{Bona-CP3,
  author    = {B{\'o}na, Mikl{\'o}s},
  title     = {Combinatorics of Permutations},
  edition   = {3rd},
  series    = {Discrete Mathematics and its Applications},
  publisher = {CRC Press},
  address   = {Boca Raton, FL},
  year      = {2022},
  doi       = {10.1201/9780429274107},
  url       = {https://doi.org/10.1201/9780429274107},
}

@article{Renyi1962,
  author  = {R{\'e}nyi, Alfr{\'e}d},
  title   = {Th{\'e}orie des {\'e}l{\'e}ments saillants d'une suite d'observations},
  journal = {Annales de la Facult{\'e} des Sciences de l'Universit{\'e} de Clermont. Math{\'e}matiques},
  volume  = {8},
  number  = {2},
  pages   = {7--13},
  year    = {1962},
  url     = {https://www.numdam.org/item/ASCFM_1962__8_2_7_0/},
}

@book{EC1,
  author    = {Stanley, Richard P.},
  title     = {Enumerative Combinatorics, Volume 1},
  edition   = {2nd},
  series    = {Cambridge Studies in Advanced Mathematics},
  volume    = {49},
  publisher = {Cambridge University Press},
  address   = {Cambridge},
  year      = {2012},
  doi       = {10.1017/CBO9781139058520},
  url       = {https://doi.org/10.1017/CBO9781139058520},
}

@article{DefantHopkinsPoznanovicPropp2023,
  author        = {Defant, Colin and Hopkins, Sam and Poznanovi{\'c}, Svetlana and Propp, James},
  title         = {Homomesy via toggleability statistics},
  journal       = {Combinatorial Theory},
  volume        = {3},
  number        = {2},
  pages         = {Paper No. 14},
  year          = {2023},
  doi           = {10.5070/C63261992},
  url           = {https://doi.org/10.5070/C63261992},
  eprint        = {2108.13227},
  archivePrefix = {arXiv},
}

@misc{Propp2021spectral,
  author        = {Propp, James},
  title         = {A spectral theory for combinatorial dynamics},
  howpublished  = {Preprint, arXiv:2105.11568 [math.CO]},
  year          = {2021},
  eprint        = {2105.11568},
  archivePrefix = {arXiv},
  url           = {https://arxiv.org/abs/2105.11568},
}

@article{Dowling_2025,
  author        = {Dowling, Will and Lafreni{\`e}re, Nadia},
  title         = {Homomesy on permutations with toggling actions},
  journal       = {Involve},
  volume        = {18},
  number        = {5},
  pages         = {829--854},
  year          = {2025},
  doi           = {10.2140/involve.2025.18.829},
  url           = {https://doi.org/10.2140/involve.2025.18.829},
  eprint        = {2312.02383},
  archivePrefix = {arXiv},
}

\end{document}